\documentclass[12pt, reqno]{amsart}
\usepackage{amsmath, amsthm, amscd, amsfonts, amssymb, graphicx, color}
\usepackage[bookmarksnumbered, colorlinks, plainpages]{hyperref}
\hypersetup{colorlinks=true,linkcolor=red, anchorcolor=green, citecolor=cyan, urlcolor=red, filecolor=magenta, pdftoolbar=true}
\newtheoremstyle{uprightplain}
{6pt}{6pt}
{\normalfont}
{}{\bfseries}{.}{ }{}
\theoremstyle{uprightplain}
\newtheorem{theorem}{Theorem}[section]
\newtheorem{lemma}[theorem]{Lemma}
\newtheorem{proposition}[theorem]{Proposition}
\newtheorem{cor}[theorem]{Corollary}
\theoremstyle{definition}

\newtheorem{question}[theorem]{Question}
\newtheorem{example}[theorem]{Example}
\newtheorem{remark}[theorem]{Remark}
\numberwithin{equation}{section}
\allowdisplaybreaks
\begin{document}
	\title [Numerical and Berezin range]{\Small{Numerical range and Berezin range of weighted composition operators on weighted Dirichlet spaces}}

	\author[S. Barik, A. Sen and K. Paul]{Somdatta Barik, Anirban Sen and Kallol paul}
	\address[Barik]{Department of Mathematics, Jadavpur University, Kolkata 700032, West Bengal, India}
	\email{bariksomdatta97@gmail.com}

	\address[Sen] {Mathematical Institute, Silesian University in Opava, Na Rybn\'{\i}\v{c}ku 1, 74601 Opava, Czech Republic}
	\email{anirbansenfulia@gmail.com; Anirban.Sen@math.slu.cz}

	\address[Paul] {Vice-Chancellor\\
		Kalyani University\\
		West Bengal 741235 \\and 
		Professor (on lien)\\ Department of mathematics\\ Jadavpur University\\Kolkata 700032\\West Bengal\\India}
	\email{kalloldada@gmail.com}
	
	\subjclass[2020]{Primary: 47A12, 47B38; Secondary: 47A05, 47B33}
	
	\keywords{Berezin range, numerical range, weighted composition operator, weighted Dirichlet spaces.}
	
	\begin{abstract}  
		We investigate the numerical ranges of weighted composition operators on weighted Dirichlet spaces, focusing on the properties of the inducing functions. We identify conditions on these functions under which the origin lies in the interior of the numerical range. The geometric structure of the numerical range is also analyzed, determining when it contains a circular or elliptical disc and computing the corresponding radius. Next, we introduce a class of Weyl-type weighted composition operators and obtain their Berezin range and Berezin number. Finally, we characterize the convexity of the Berezin range for weighted composition operators on these spaces.
	\end{abstract}
	\maketitle	
	\tableofcontents

	\section{Introduction and preliminaries}
	
	Let $\mathbb D=\{z\in\mathbb C: |z|<1\}$ be the open unit disc and $\mathbb{T}=\{z \in \mathbb{C} : |z|=1\}$ be the unit circle of $\mathbb{C}.$ Let $\mathcal H(\mathbb D)$ denote the space of all holomorphic functions on $\mathbb D.$ We use the notation $\text{int}(X)$ for the interior of a set $X\in\mathbb C.$ For $s \in \mathbb{R},$ the weighted Dirichlet spaces $\mathcal{D}_{s}=\mathcal{D}_{s}(\mathbb{D})$ are defined as
	\[\mathcal{D}_{s}=\left\{f \in \mathcal H(\mathbb{D}) : f(z)=\sum_{n=0}^{\infty}a_nz^n, \sum_{n=0}^{\infty}(n+1)^{1-s}|a_n|^2< \infty \right\}.\]
	Weighted Dirichlet spaces are weighted Hardy spaces $H^2(\beta),$ where the associated weight sequence is $\beta(n)=(n+1)^{\frac{1-s}{2}}.$ This framework recovers several classical spaces: $\mathcal{D}_{1}$ is the classical Hardy-Hilbert space $H^2(\mathbb{D}),$ $\mathcal{D}_{2}$ is the classical Bergman space $A^2(\mathbb{D})$ and $\mathcal{D}_{0}$ coincides with the Dirichlet space $\mathcal{D}.$ In this article, we are primarily concerned with the case $0<s<1$ and we refer to the corresponding spaces $\mathcal D_s$
	as weighted Dirichlet spaces. 
	Each $\mathcal{D}_{s}$ is a separable Hilbert space with inner product 
	\[\langle f,g \rangle_{\mathcal{D}_{s}}=\sum_{n=0}^{\infty}\frac{\Gamma(n+1)\Gamma(s)}{\Gamma(n+s)}a_n \overline{b}_n,\]
	where $f(z)=\sum\limits_{n=0}^{\infty}a_nz^n$ and $g(z)=\sum\limits_{n=0}^{\infty}b_nz^n.$ The space $\mathcal D_s$ has an orthonormal basis $\{e_n\}_{n=0}^{\infty}$ given by $e_n(z)=\sqrt{\frac{\Gamma(n+s)}{\Gamma(n+1)\Gamma(s)}} z^n.$
	Moreover, these spaces are reproducing kernel Hilbert spaces with a complete Nevanlinna-Pick kernel. In such a case we will also say that the space is a complete Nevanlinna-Pick space (see \cite{AM_BOOK}). The reproducing kernel at $w \in \mathbb D$ is given by
	\begin{align}\label{r_1}
		k^{s}_w(z)= \frac{1}{(1-\bar{w}z)^s},~z \in \mathbb D.
	\end{align}
	The normalized reproducing kernel at $w \in \mathbb D$ is given by
	\begin{align}\label{r_2}
		\hat k^{s}_w(z)= \frac{(1-|w|^2)^\frac{s}{2}}{(1-\bar{w}z)^s},~z \in \mathbb D.
	\end{align}
	For further information on the weighted Dirichlet space, see \cite{Cowen_BOOK, MS_CJM_1986}.
	
	Let $\mathcal B(\mathcal D_s)$ denote the space of all bounded linear operators on $\mathcal D_s.$ The numerical range of $T\in\mathcal B(\mathcal D_s)$ is denoted by $W(T; \mathcal D_s)$ and defined as $$W(T; \mathcal D_s)=\{\langle Tf, f\rangle_{\mathcal D_s}:f\in \mathcal D_s, \|f\|_{\mathcal D_s}=1\}.$$ The numerical radius of $T$ on $\mathcal D_s$ is denoted by $w(T;  \mathcal D_s)$ and defined as 
	$$w(T; \mathcal D_s)=\sup\{|\langle Tf, f\rangle_{\mathcal D_s}|:f\in \mathcal D_s, \|f\|_{\mathcal D_s}=1\}. $$
	Note that, $W(T; \mathcal D_s)$ is a bounded convex subset of $\mathbb C.$ For further details on numerical ranges and numerical radius we refer to \cite{wu_gau_book}.
	%Furthermore, in the two-dimensional case, $W(T; \mathcal D_s)$ has a particularly nice geometric description: it is an ellipse whose foci are the eigenvalues of $T$. {\color{red}In higher dimensions, however, determining the shape of the numerical range becomes significantly more challenging and has been investigated by many authors.}\\ 
	
	For $T \in \mathcal B(\mathcal D_s),$ the Berezin transform of $T$ (\cite{BER1974,BER1972}) is the function $\widetilde{T}: \mathbb{D} \to \mathbb{C}$ defined by
	\[\widetilde{T}(z)=\langle T\hat{k}^{s}_z,\hat{k}^{s}_z \rangle_{\mathcal D_s},\,\,\,\,\ z \in \mathbb{D}.\]
	
	It is well known that the Berezin transform is a one-to-one, bounded, and real-analytic function on $\mathbb D,$ see \cite{BER1972}. Several properties of an operator are governed by its Berezin transform. In particular, the Berezin transform uniquely determines the operator, and the invertibility of the operator is likewise determined by its Berezin transform, see \cite{KAR_JFA_2006, ZZ_JOT_2016}. One can refer to \cite{C_PAMS_2012, E_JFA_1994, Zhu_AMS_2021} for the various aspects of the theory of Berezin transform.

	The Berezin range and Berezin radius of $T$ (\cite{K_CAOT_2013, KAR_JFA_2006}), denoted by $\textbf{Ber}(T)$ and  $\textbf{ber}(T),$ are respectively, defined as 
	\[\textbf{Ber}(T;\mathcal D_s)=\{\widetilde{T}(z): z \in \mathbb{D}\} \,\, 
	\text{and}\,\, \,\, \textbf{ber}(T;\mathcal D_s)=\sup_{z \in \mathbb{D}}|\widetilde{T}(z)|.\]
	From the definition, it follows immediately that the Berezin range is contained in the numerical range of the operator.

	For $\psi \in \mathcal H(\mathbb{D})$ and a holomorphic self-map $\phi$ of $\mathbb D,$ the weighted composition operator $C_{\psi,\phi }$ on  $\mathcal H(\mathbb{D})$ is defined by
	\[C_{\psi,\phi }f=\psi f\circ \phi,~~f \in \mathcal H(\mathbb{D}).\]
	In particular, when $\psi\equiv 1,$ $C_{\psi,\phi }$ becomes the composition operator $C_{\phi}.$ Notably, the boundedness and compactness of weighted composition operators on weighted Dirichlet spaces are studied in \cite{Sen1,Z_PAMS_1998}. In this article, our analysis is confined to weighted composition operators on $\mathcal D_s$ for $s \in (0,1).$ 
	
	With this notation in hand, we now give a broad overview of our results.
	%{\color{red}In this article, we are concerned with the numerical range and the Berezin range of weighted composition operators on $\mathcal D_s.$} 
	Motivated by \cite{BS_IEOT_2002,GJS_JMAA_2014} and the recent work in \cite{SHBP_CMB_25}, we initiate the study of the numerical range of weighted composition operators $C_{\psi,\phi}$ on $\mathcal D_s,$ focusing in particular on the question of whether the origin belongs to $W(C_{\psi,\phi};\mathcal D_s).$ 
	Although $0\notin W(C_{\psi,\phi};\mathcal D_s)$ in general, we show that this behavior changes under suitable assumptions on the symbols $\phi$ and $\psi.$ The following theorem presents our first result. 
	\begin{theorem}\label{Th_s1}
		Suppose $\phi$ is a holomorphic self-map of $\mathbb{D}$ and $\psi\in\mathcal{H}(\mathbb{D})$ with $C_{\psi,\phi}\in\mathcal{B}(\mathcal{D}_{s})$.\\
		$(i)$~ If $\phi$ is identity map on $\mathbb D$ and $\psi$ has a zero in $\mathbb{D}$ then $0\in W(C_{\psi,\phi}; \mathcal D_s).$\\
		$(ii)$~If $\phi$ is identity map on $\mathbb D$ and $\psi$ is continuous and vanishes at some point $z_0\in\mathbb T$ then $0\in \overline{W(C_{\psi,\phi}; \mathcal D_s)}.$\\
		$(iii)$~ If $\phi$ is non-identity on $\mathbb D$ then $0\in \overline{W(C_{\psi,\phi}; \mathcal D_s)}.$
		
	\end{theorem}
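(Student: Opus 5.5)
Throughout we use the Berezin transform. Since $\textbf{Ber}(C_{\psi,\phi};\mathcal D_s)\subseteq W(C_{\psi,\phi};\mathcal D_s)$, it suffices to find points $w\in\mathbb D$ at which $\widetilde{C_{\psi,\phi}}(w)$ is $0$, or tends to $0$. By the reproducing property,
\[
\widetilde{C_{\psi,\phi}}(w)=\langle \psi\cdot \hat k^s_w\circ\phi,\hat k^s_w\rangle_{\mathcal D_s}
=\psi(w)\,\hat k^s_w(\phi(w))\,(1-|w|^2)^{s/2}
=\psi(w)\,\frac{(1-|w|^2)^{s}}{(1-\overline{w}\phi(w))^{s}}.
\]

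\textbf{Part (i).} Take $\phi(z)=z$ and let $\psi(z_0)=0$ with $z_0\in\mathbb D$. Then $\widetilde{C_{\psi,\phi}}(z_0)=\psi(z_0)=0$, so $0\in\textbf{Ber}\subseteq W$.

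\textbf{Part (ii).} Again $\phi(z)=z$, so $\widetilde{C_{\psi,\phi}}(w)=\psi(w)$. Let $w\to z_0\in\mathbb T$ radially. Continuity of $\psi$ up to $z_0$ gives $\psi(w)\to 0$, so $0\in\overline{W}$.

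\textbf{Part (iii).} This is the main step. Take $\phi$ not the identity. We need a sequence $w_n\in\mathbb D$ along which
\[
\psi(w_n)\left(\frac{1-|w_n|^2}{1-\overline{w_n}\phi(w_n)}\right)^{s}\to 0 .
\]
A pure Berezin argument needs control on $\psi$ near the boundary. Boundedness of $C_{\psi,\phi}$ gives $C^*_{\psi,\phi}k_w=\overline{\psi(w)}k_{\phi(w)}$, and hence
\[
|\psi(w)|\le \|C_{\psi,\phi}\|\,\frac{(1-|\phi(w)|^2)^{s/2}}{(1-|w|^2)^{s/2}} .
\]
Combining these,
\[
|\widetilde{C_{\psi,\phi}}(w)|\le \|C_{\psi,\phi}\|\,
\frac{(1-|w|^2)^{s/2}(1-|\phi(w)|^2)^{s/2}}{|1-\overline{w}\phi(w)|^{s}}
=\|C_{\psi,\phi}\|\,\bigl(1-\rho(w,\phi(w))^2\bigr)^{s/2},
\]
where $\rho$ is the pseudo-hyperbolic distance. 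It therefore suffices to find $w_n$ with $\rho(w_n,\phi(w_n))\to 1$.

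If $\phi$ has no fixed point in $\mathbb D$, or a fixed point $a$ with $\phi$ not an automorphism, the Schwarz–Pick lemma gives the following. For $|w|\to 1$ and $\phi$ not a rotation-type automorphism, $\phi(w)$ stays hyperbolically far from $w$ along suitable sequences; for instance when $\sup|\phi|<1$ this holds automatically.

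The obstacle is the general non-identity $\phi$, e.g. an automorphism or a map with boundary fixed points. For these I would change approach and use test functions $f=\alpha \hat k_{w}+\beta \hat k_{\phi(w)}$, or the convexity of $W$. Convexity lets us show that $W$ contains values of $\widetilde{C_{\psi,\phi}}$ in opposite directions, whose convex hull approaches $0$.

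As a fallback for the automorphism case I would use the interlacing of $w,\phi(w),\phi_2(w),\dots$. The vectors $\hat k_{w_n}$ with $w_n=\phi_n(w)$ become nearly orthogonal. Averaging $f=N^{-1/2}\sum_{j<N}\hat k_{\phi_j(w)}$ with suitable unimodular weights $e^{2\pi i jm/N}$ produces a combination of terms of the form $\langle C_{\psi,\phi}f,f\rangle$. Summing over the $N$ choices of $m$ then averages these values to near $0$, and convexity of $W$ puts $0$ in $\overline W$.
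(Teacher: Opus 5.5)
Parts (i) and (ii) are fine and match the paper. In (iii) your reduction is correct, and it is cleaner than the paper's. From $C^*_{\psi,\phi}k_w=\overline{\psi(w)}k_{\phi(w)}$ you get $|\widetilde{C_{\psi,\phi}}(w)|\le\|C_{\psi,\phi}\|\,(1-\rho(w,\phi(w))^2)^{s/2}$, so no boundary information on $\psi$ is needed. The paper instead uses $\psi\in\mathcal D_s\subseteq H^2(\mathbb D)$ and the existence of its radial limit at a suitable point.

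However, there is a genuine gap: you never prove the one fact the reduction rests on. You need that for \emph{every} non-identity $\phi$ there are $w_n\in\mathbb D$ with $\rho(w_n,\phi(w_n))\to1$, and you only settle the case $\sup|\phi|<1$. The appeal to Schwarz--Pick does no work, since it gives only the upper bound $\rho(\phi(a),\phi(b))\le\rho(a,b)$, never a lower bound for $\rho(w,\phi(w))$. You also leave automorphisms and maps with boundary fixed points as an admitted obstacle.

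The fallback does not fill the gap. For an automorphism, $\rho(\phi_j(w),\phi_{j+1}(w))=\rho(w,\phi(w))$ is constant along the orbit. So consecutive kernels are not nearly orthogonal unless $w$ was already chosen as required. Moreover, averaging $\langle C_{\psi,\phi}f_m,f_m\rangle$ over $m$ only removes the off-diagonal terms. Before normalization it leaves $\frac1N\sum_{j<N}\widetilde{C_{\psi,\phi}}(\phi_j(w))$, an average of Berezin values along an orbit, which need not be near $0$. For instance, near an interior fixed point $a$ of an elliptic automorphism these values are close to $\widetilde{C_{\psi,\phi}}(a)=\psi(a)$.

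The missing step is short, and either of two arguments supplies it.

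\textbf{The paper's route.} $\phi$ has radial limits $\phi^*$ a.e.\ on $\mathbb T$, and $\phi^*(\zeta)\ne\zeta$ for a.e.\ $\zeta$. Otherwise the bounded analytic function $\phi(z)-z$ has zero radial limits on a set of positive measure, which forces $\phi(z)\equiv z$. Along the radius to such a $\zeta$,
\[
1-\rho(w,\phi(w))^2\le\frac{1-|w|^2}{|1-\overline{w}\phi(w)|^2}\longrightarrow0,
\]
because $|1-\overline{\zeta}\phi^*(\zeta)|>0$.

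\textbf{A more elementary route suited to your setup.} Suppose $\rho(w,\phi(w))\le c<1$ for all $w\in\mathbb D$. Then $|1-\overline{w}\phi(w)|^2\le(1-|w|^2)/(1-c^2)$, hence
\[
|w|\,|\phi(w)-w|\le|\overline{w}\phi(w)-1|+(1-|w|^2)\longrightarrow0
\]
uniformly as $|w|\to1$. The maximum principle applied to $\phi(z)-z$ then gives $\phi(z)\equiv z$.

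With either argument inserted, your proof of (iii) is complete, and it then needs no boundary behaviour of $\psi$ at all.
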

	Next, we address the problem of zero inclusion for weighted composition operators $C_{\psi,\phi} $ when the symbol $\phi$ fixes the origin but is neither a rotation nor a dilation. The following theorem provides a complete answer in this setting.
	\begin{theorem}\label{Th_s2}
		Suppose that $C_{\psi,\phi}\in\mathcal B(\mathcal D_s)$ and $\phi(0)=0.$ Unless $\phi$ is of the special form $\phi(z)=\lambda z$ with $\lambda\in\overline {\mathbb D},$ the origin belongs to the interior of the numerical range of $C_{\psi,\phi}$ on $\mathcal D_s$.
	\end{theorem}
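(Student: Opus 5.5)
Throughout we assume $\psi\not\equiv 0$; otherwise $W(C_{\psi,\phi};\mathcal D_s)=\{0\}$ has empty interior. The plan is to find, for a suitable pair of indices $j<k$, a two-dimensional compression of $C_{\psi,\phi}$ whose numerical range already contains a neighbourhood of $0$. We use the orthonormal basis $\{e_n\}$ of $\mathcal D_s$ and the matrix entries $\langle C_{\psi,\phi}e_j,e_k\rangle_{\mathcal D_s}$. For $P$ the orthogonal projection onto $\operatorname{span}\{e_j,e_k\}$, we have $W(PC_{\psi,\phi}P|_{\mathrm{ran}P})\subseteq W(C_{\psi,\phi};\mathcal D_s)$.

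Since $\phi(0)=0$, $\phi^j$ vanishes to order at least $j$ at the origin. Hence the matrix is lower triangular. The diagonal entries are $d_j=\psi(0)\lambda^j$, where $\lambda=\phi'(0)$. By the Schwarz lemma, $|\lambda|\le 1$, and $|\lambda|=1$ forces $\phi$ to be a rotation, which is excluded; so $|\lambda|<1$. Because $\phi$ is not of the form $\lambda z$, write
\[
\phi(z)=\lambda z+cz^q+\cdots,\qquad c\neq 0,\ q\ge 2,
\]
and write $\psi(z)=z^pg(z)$ with $g(0)=b\neq 0$.

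The compression to $\operatorname{span}\{e_j,e_k\}$ is the matrix $\begin{pmatrix} d_j&0\\ t&d_k\end{pmatrix}$ with $t=\langle C_{\psi,\phi}e_j,e_k\rangle$. By the elliptical range theorem, its numerical range is the closed elliptical disc with foci $d_j,d_k$ and minor axis $|t|$. If $|t|>|d_j|+|d_k|$, this disc contains $0$ in its interior: every point within distance $|t|/2$ of the centre lies in it, and the centre is within $(|d_j|+|d_k|)/2$ of $0$. Also,
\[
t=\sqrt{\frac{\Gamma(k+s)\Gamma(j+1)}{\Gamma(j+s)\Gamma(k+1)}}\;\cdot\;\big[\text{coefficient of } z^k \text{ in } \psi\phi^j\big].
\]
The Gamma factor is positive and, for $k-j$ fixed, tends to $1$ as $j\to\infty$.

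We split into three cases.
\begin{itemize}
\item[(a)] $\lambda=0$. Here $d_j=0$ for all $j\ge1$. The lowest term of $\psi\phi^j$ is $bc^jz^{p+qj}$, and $p+qj>j$ for $j\ge1$. Taking $j=1$ and $k=p+q$ gives $t\neq0$ and both diagonal entries zero, so the numerical range of the compression is a disc of radius $|t|/2$ centred at $0$.
\item[(b)] $\lambda\neq0$ and $p\ge1$. Then $\psi(0)=0$, so all $d_j=0$. Taking $k=p+j$, the relevant coefficient is $b\lambda^j\neq0$, and we conclude as in (a).
\item[(c)] $\lambda\ne0$ and $p=0$, so $\psi(0)=b\neq0$. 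Take $k=j+q-1$. The coefficient of $z^{j+q-1}$ in $\phi^j$ is $j\lambda^{j-1}c$, and $\phi^j$ has no terms strictly between $z^j$ and $z^{j+q-1}$. Hence the coefficient of $z^k$ in $\psi\phi^j$ is $bcj\lambda^{j-1}+a_{q-1}\lambda^j$, where $\psi(z)=\sum a_mz^m$. This gives
\[
\frac{|t|}{|d_j|+|d_k|}\sim\frac{|bc|\,j\,|\lambda|^{j-1}-|a_{q-1}||\lambda|^j}{|b|\,|\lambda|^j(1+|\lambda|^{q-1})}\longrightarrow\infty\quad (j\to\infty).
\]
So for large $j$ the criterion $|t|>|d_j|+|d_k|$ holds.
\end{itemize}

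The main obstacle is case (c). There the diagonal entries are nonzero, so we need the precise coefficient computation above showing that the off-diagonal entry grows like $j|\lambda|^{j-1}$ and thus beats the diagonal, which is only of size $|\lambda|^j$. We also need to check that the Gamma-ratio normalisation stays bounded below, which holds because $k-j=q-1$ is fixed.
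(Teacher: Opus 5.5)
Your proof is correct. In the main case $\phi'(0)\neq 0$, $\psi(0)\neq 0$ it is the paper's argument. The paper compresses to $\operatorname{span}\{e_r,e_{r+p}\}$ (its $p$ is your $q-1$) and factors out $\phi'(0)^r$. It then notes that the off-diagonal entry $(b_p+rab_0)$ grows linearly in $r$, while the foci $b_0$ and $b_0\phi'(0)^p$ stay fixed. Your criterion $|t|>|d_j|+|d_k|$ is a precise form of its ``minor axis exceeds the modulus of the centre''.

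The difference is in the degenerate cases. The paper handles $\phi'(0)=0$ (where $\phi$ is not univalent) and $\psi(0)=0$ by quoting Lemma~\ref{lm_g1}, which it imports from the literature. Your cases (a) and (b) instead exhibit explicit nilpotent $2\times 2$ compressions, onto $\operatorname{span}\{e_1,e_{p+q}\}$ and $\operatorname{span}\{e_j,e_{j+p}\}$ respectively. Their numerical ranges are discs centred at $0$. This makes your proof self-contained and gives an explicit disc about the origin, at the cost of a slightly longer case analysis.

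There are two small slips, neither affecting the argument. First, your normalising factor is inverted. For $f=\sum a_nz^n$ one has $\langle f,e_k\rangle_{\mathcal D_s}=a_k\|z^k\|_{\mathcal D_s}$, so, as in the paper,
\[
t=\sqrt{\frac{\Gamma(j+s)\Gamma(k+1)}{\Gamma(j+1)\Gamma(k+s)}}\,\bigl[\text{coefficient of } z^k \text{ in } \psi\phi^j\bigr].
\]
You only use that this factor is positive and tends to $1$ as $j\to\infty$ with $k-j$ fixed, which holds for either version. Second, the $\sim$ in case (c) should be a lower bound, obtained from the reverse triangle inequality and the Gamma factor tending to $1$. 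That lower bound is all you need.
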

	In contrast, when $\phi(z)=\lambda z,~~\lambda\in[-1, 0]$ then for certain choices of $\psi$, $0\in W(C_{\psi,\phi}; \mathcal D_s).$ This situation is addressed below.
	\begin{theorem}\label{Th_s3}
		Suppose that $C_{\psi,\phi}\in\mathcal B(\mathcal D_s)$ and $\psi$ is nonconstant. If $\phi(z)=\lambda z,~~\lambda\in[-1, 0],$ then the origin belongs to the interior of the numerical range of $C_{\psi,\phi}$ on $\mathcal D_s$.
	\end{theorem}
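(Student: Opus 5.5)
The plan is to find, inside $W(C_{\psi,\phi};\mathcal D_s)$, a small convex set that visibly contains $0$ in its interior. I will build it from $2\times 2$ compressions of $C_{\psi,\phi}$ to spans of basis vectors $e_j$, together with single diagonal entries. Write $\psi(z)=\sum_{n\ge 0}a_nz^n$. Since $\psi$ is nonconstant, there is a smallest index $m\ge 1$ with $a_m\neq 0$. Set $\gamma_n=\frac{\Gamma(n+s)}{\Gamma(n+1)\Gamma(s)}$, so that $e_n=\sqrt{\gamma_n}\,z^n$.

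\textbf{Matrix entries.} Since $\phi(z)=\lambda z$, we have
\[
C_{\psi,\phi}e_j=\sqrt{\gamma_j}\,\lambda^j z^j\psi(z)=\sqrt{\gamma_j}\,\lambda^j\sum_{n\ge0}a_nz^{n+j}.
\]
Hence $\langle C_{\psi,\phi}e_j,e_i\rangle=\lambda^j a_{i-j}\sqrt{\gamma_j/\gamma_i}$ for $i\ge j$, and this entry vanishes for $i<j$. In particular:
\begin{itemize}
\item the diagonal entries are $\langle C_{\psi,\phi}e_j,e_j\rangle=a_0\lambda^j\in W(C_{\psi,\phi};\mathcal D_s)$ (with $0^0=1$);
\item on $\operatorname{span}\{e_0,e_m\}$ the compression is
\[
A=\begin{pmatrix} a_0 & 0\\ b & a_0\lambda^m\end{pmatrix},\qquad b=a_m\sqrt{\gamma_0/\gamma_m}\neq 0,
\]
where the $(2,2)$ entry is $a_0\lambda^m$ because $a_1=\dots=a_{m-1}=0$.
\end{itemize}
Choosing $j=0$ here means no power of $\lambda$ multiplies $b$, so $b\neq 0$ even when $\lambda=0$. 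By the elliptical range theorem, $W(A)$ is a closed elliptical disc with foci $a_0$ and $a_0\lambda^m$ and minor axis of length $|b|>0$. It is therefore nondegenerate, and both foci lie in its interior. Moreover $W(A)\subseteq W(C_{\psi,\phi};\mathcal D_s)$.

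\textbf{Case analysis.}
\begin{itemize}
\item If $a_0=0$, both foci are $0$. Then $W(A)$ is a disc of radius $|b|/2$ centred at the origin, and we are done.
\item If $a_0\ne 0$ and $\lambda=0$, the foci are $a_0$ and $0$. So $0$ is a focus of a nondegenerate ellipse, hence an interior point of $W(A)$.
\item If $a_0\neq0$ and $\lambda\in[-1,0)$, I use convexity of $W(C_{\psi,\phi};\mathcal D_s)$. It contains $W(A)$, whose interior contains the focus $a_0$. It also contains the diagonal value $p=\langle C_{\psi,\phi}e_1,e_1\rangle=a_0\lambda$. Since $\lambda<0$, the origin lies on the open segment from $p$ to $a_0$, because $0=\frac{1}{1-\lambda}\,a_0\lambda+\frac{-\lambda}{1-\lambda}\,a_0$. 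A standard convexity fact now applies: if $K$ is convex, $x\in\operatorname{int}K$ and $p\in K$, then every point of the half-open segment $(p,x]$ lies in $\operatorname{int}K$. It follows that $0\in\operatorname{int}W(C_{\psi,\phi};\mathcal D_s)$. This also covers $\lambda=-1$ and even $m$.
\end{itemize}

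\textbf{Main obstacle.} The point needing care is the third case. The $2\times2$ ellipse alone may have both foci on the same side of $0$, for instance when $m$ is even. The fix is to combine the nondegenerate ellipse, which supplies two-dimensional thickness, with a single diagonal entry on the opposite side of the origin, which exists because $\lambda<0$. The remaining verifications are that $b\neq 0$, that the compression's numerical range sits inside $W(C_{\psi,\phi};\mathcal D_s)$ (restrict to unit vectors in the span), and the convexity lemma above. All of these are routine.
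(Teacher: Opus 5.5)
Your proof is correct. It differs from the paper's proof in how the two-dimensional neighbourhood of the origin is produced.

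The paper splits on $\psi(0)$:
\begin{enumerate}
\item When $\psi(0)=0$, it invokes Lemma~\ref{lm_g1}, imported from \cite{KEY_JME_21}, for $\lambda\neq0$. For $\lambda=0$ it uses the rank-one description in Proposition~\ref{prop_1}.
\item When $\psi(0)\neq0$, it normalizes to $\psi=1+\zeta$ and applies Lemma~\ref{lm_g1} to $C_{\zeta,\phi}$. Since $\lambda$ is real, $C_\phi$ is self-adjoint, so $\langle C_\phi g,g\rangle\in\mathbb R$. This gives points $k_1,k_2$ of $W(C_{\psi,\phi};\mathcal D_s)$ in the open upper and lower half-planes. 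It then notes that $0$ is interior to the convex hull of $1,\lambda,k_1,k_2$.
\end{enumerate}

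You get the thickness instead from an explicit $2\times2$ compression to $\text{span}\{e_0,e_m\}$. Its off-diagonal entry $b$ is nonzero, so the elliptical range theorem gives a nondegenerate elliptical disc with $a_0$ in its interior. You then cone this neighbourhood of $a_0$ toward the diagonal value $a_0\lambda$, which lies on the other side of $0$.

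What your route buys:
\begin{itemize}
\item It is self-contained, with no appeal to \cite{KEY_JME_21} or to Proposition~\ref{prop_1}.
\item It treats $\lambda=0$ and $a_0=0$ inside the same computation.
\item It is quantitative. For example, when $a_0=0$ it gives an explicit disc of radius $|b|/2$ about the origin. That radius is in fact at least the one in Theorem~\ref{Th_ss1}, since $\tfrac{x}{1+x}\le\tfrac{\sqrt{x}}{2}$.
\end{itemize}

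What the paper's route buys: it is shorter given the lemma, and the lemma covers far more general symbols. However, the lemma requires $\phi$ to be nonconstant, which is why the paper must handle $\lambda=0$ separately.

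One harmless slip: the $(2,2)$ entry $a_0\lambda^m$ is a diagonal entry and does not depend on $a_1=\dots=a_{m-1}=0$. Minimality of $m$ is not needed; any $m\ge1$ with $a_m\neq0$ works.
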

	%{\color{red} Before proceeding, we make several remarks concerning the preceding discussion.}
	
	We now move to the next stage of our work, focusing on the geometric characterization of the numerical range of weighted composition operators on $\mathcal D_s$. Understanding whether the numerical range contains a circular or elliptical disc is particularly useful, as it leads to explicit numerical radius bounds. In this direction, we determine the radius of the disc or the lengths of the major and minor axes. Our first result is the following.
	\begin{theorem}\label{Th_ss1}
		Let $C_{\psi,\phi}\in\mathcal B(\mathcal D_s)$ with $\phi(0)=0$ and $\psi$ vanishes at the origin to positive order $r.$ If $b_r$ denotes the $r$-th Taylor coefficient of $\psi$ then $W(C_{\psi,\phi}; \mathcal D_s)$ contains the disc centered at origin with radius $\frac{\Gamma(r+1)\Gamma(s)}{\Gamma(r+s)+\Gamma(r+1)\Gamma(s)} |b_r|.$
	\end{theorem}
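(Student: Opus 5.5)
Our plan is to compress $C_{\psi,\phi}$ to the two-dimensional subspace $\mathrm{span}\{e_0,e_r\}$. The numerical range of the compression is contained in $W(C_{\psi,\phi};\mathcal D_s)$. A $2\times 2$ matrix of the form $\begin{pmatrix} a & 0\\ c & d\end{pmatrix}$ has as numerical range an elliptical disc with foci $a,d$ and minor axis $|c|$, so it contains the disc centred at the midpoint with radius governed by $|c|/2$ and $|a-d|$. We then estimate this disc.

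First, we compute the entries. Since $\phi(0)=0$, we have $C_{\psi,\phi}e_0=\psi$, and $\psi$ vanishes to order $r$. Hence $\langle C_{\psi,\phi}e_0,e_0\rangle=\psi(0)=0$, and $\langle C_{\psi,\phi}e_0,e_r\rangle=b_r\|z^r\|^{-1}\langle z^r,z^r\rangle\,\|e_0\|^{-1}$. With the normalisation $\|z^n\|^2=\frac{\Gamma(n+1)\Gamma(s)}{\Gamma(n+s)}$ and $e_n=z^n/\|z^n\|$, this gives $\langle C_{\psi,\phi}e_0,e_r\rangle=b_r\|z^r\|=b_r\sqrt{\Gamma(r+1)\Gamma(s)/\Gamma(r+s)}$. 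Next, $C_{\psi,\phi}e_r=\psi\phi^r/\|z^r\|$ vanishes at $0$ to order at least $2r>r$ because $\phi(0)=0$. So $\langle C_{\psi,\phi}e_r,e_r\rangle=0$ and $\langle C_{\psi,\phi}e_r,e_0\rangle=0$. The compression is therefore the nilpotent matrix $\begin{pmatrix}0&0\\ c&0\end{pmatrix}$, whose numerical range is the closed disc of radius $|c|/2$ about $0$. This gives the radius $\frac12|b_r|\sqrt{\Gamma(r+1)\Gamma(s)/\Gamma(r+s)}$.

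The main obstacle is that this radius does not visibly match the stated $\frac{\Gamma(r+1)\Gamma(s)}{\Gamma(r+s)+\Gamma(r+1)\Gamma(s)}|b_r|$. Writing $\alpha=\|z^r\|^2=\Gamma(r+1)\Gamma(s)/\Gamma(r+s)$, the target is $\frac{\alpha}{1+\alpha}|b_r|$, while our disc has radius $\frac{\sqrt\alpha}{2}|b_r|$. By AM--GM, $\frac{\alpha}{1+\alpha}\le\frac{\sqrt\alpha}{2}$, so the disc of radius $\frac{\sqrt\alpha}{2}|b_r|$ contains the stated disc, and the theorem follows, with equality exactly when $\alpha=1$. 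The authors presumably reached their constant by testing unnormalised functions $f=1+\mu z^r$ directly. Then $\langle C_{\psi,\phi}f,f\rangle=\bar\mu b_r\langle z^r,z^r\rangle+\text{(terms vanishing)}=\bar\mu b_r\alpha$ and $\|f\|^2=1+|\mu|^2\alpha$. With $|\mu|=1$ and $\arg\mu$ varying, this yields the circle of radius $\frac{\alpha|b_r|}{1+\alpha}$. Combining this with convexity and the fact that $0\in W$ (take $\mu=0$) gives the whole disc. I would present this direct computation, checking carefully that all cross terms vanish: $\psi\cdot 1$ paired with $1$ gives $\psi(0)=0$, and $\psi\phi^r\mu$ has order $\ge 2r$, so it is orthogonal to both $1$ and $z^r$.

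Finally, the filled disc follows because $W$ is convex and contains both the circle and $0$, by the Toeplitz--Hausdorff theorem.
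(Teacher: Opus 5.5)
Your proof is correct, and your main route differs from the paper's. The paper does not compress. It plugs in the unit vectors $g=(\mu+z^r)/\sqrt{1+\alpha}$, $\mu\in\mathbb T$, with $\alpha=\frac{\Gamma(r+1)\Gamma(s)}{\Gamma(r+s)}$. It then obtains $\langle C_{\psi,\phi}g,g\rangle_{\mathcal D_s}=\frac{\alpha}{1+\alpha}\,\mu b_r$ and lets $\mu$ run over $\mathbb T$. That is exactly your fallback computation; putting $\mu$ on $z^r$ rather than on the constant only turns $\mu$ into $\bar\mu$.

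Your compression to $\mathrm{span}\{e_0,e_r\}$ shows where the paper's constant comes from and why it is not optimal. For a unit vector $xe_0+ye_r$ the quadratic form is $\sqrt\alpha\, b_r\, x\bar y$. The paper's vector is $g=\frac{1}{\sqrt{1+\alpha}}(\mu e_0+\sqrt\alpha\, e_r)$, whose unbalanced weights give $|x\bar y|=\frac{\sqrt\alpha}{1+\alpha}$. Taking $|x|=|y|=\frac{1}{\sqrt2}$ gives the maximum $\frac12$. Since $\alpha=\prod_{j=0}^{r-1}\frac{j+1}{j+s}>1$ for $r\ge 1$ and $0<s<1$, your AM--GM step is strict. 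So your argument proves the stronger conclusion that $W(C_{\psi,\phi};\mathcal D_s)$ contains the disc of radius $\frac12\sqrt\alpha\,|b_r|$. This cannot be improved in general: for $\phi\equiv 0$ and $\psi=b_rz^r$ the numerical range is exactly that disc. The paper's route buys only brevity, landing directly on the stated constant without a comparison inequality.

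As a side remark, both computations use only $\psi(0)=0$. The entries coming from $\psi\phi^r$ vanish because that function has order $>r$ as soon as $b_0=0$. Hence the conclusion holds for every $r\ge 1$ with $b_r$ the $r$-th coefficient, not just for the order of vanishing.
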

	For specific choices of $\phi$ and under appropriate additional conditions, we demonstrate that the numerical range $W(C_{\psi,\phi}; \mathcal D_s)$ contains a disc. The corresponding results are given below:
	\begin{theorem}\label{Th_ss2}
		Suppose $C_{\psi,\phi}\in\mathcal B(\mathcal D_s)$ with $\phi(z)=\mu z,~\mu\neq0$ and $\psi(z)=\sum\limits_{n=1}^\infty b_nz^n.$ Then for each integer $r\geq2,$ $W(C_{\psi,\phi}; \mathcal D_s)$  contains the disc centered at origin with radius $\frac 12 \sqrt{\frac{\Gamma(r+1)\Gamma(s+1)}{\Gamma(r+s)}}|\mu b_{r-1}|.$
	\end{theorem}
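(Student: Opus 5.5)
The plan is to compress $C_{\psi,\phi}$ to a two-dimensional subspace spanned by two orthonormal basis vectors. On that subspace the compression should be a nilpotent $2\times 2$ matrix, whose numerical range is a disc centered at the origin. Since the numerical range of a compression is contained in the numerical range of the operator (if $f$ is a unit vector in the subspace, then $\langle PC_{\psi,\phi}f,f\rangle=\langle C_{\psi,\phi}f,f\rangle$), this disc lies in $W(C_{\psi,\phi};\mathcal D_s)$. The natural choice is $M=\operatorname{span}\{e_1,e_r\}$, which is genuinely two-dimensional because $r\geq 2$.

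Write $c_j=\sqrt{\frac{\Gamma(j+s)}{\Gamma(j+1)\Gamma(s)}}$, so that $e_j=c_jz^j$ and $\langle z^j,e_j\rangle_{\mathcal D_s}=1/c_j$. For $\phi(z)=\mu z$ we get
\[C_{\psi,\phi}e_j=c_j\mu^j z^j\psi(z)=c_j\mu^j\sum_{n\geq 1}b_n z^{n+j}.\]
We then compute the four matrix entries of the compression.

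Diagonal entries. The entry $\langle C_{\psi,\phi}e_1,e_1\rangle$ equals $c_1^2\mu b_0\langle z,z\rangle=\mu b_0=0$, since $b_0=0$. Similarly $\langle C_{\psi,\phi}e_r,e_r\rangle=\mu^r b_0=0$.

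Entry $\langle C_{\psi,\phi}e_r,e_1\rangle$. This vanishes, because $z^r\psi$ only contains powers $z^{m}$ with $m\geq r+1>1$.

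Entry $\langle C_{\psi,\phi}e_1,e_r\rangle$. Only the $z^r$ coefficient of $c_1\mu z\psi(z)$ contributes, and that coefficient is $c_1\mu b_{r-1}$. Hence the entry is
\[a=c_1\mu b_{r-1}\cdot\frac{1}{c_r}.\]

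So, with respect to $\{e_1,e_r\}$, the compression is $\begin{pmatrix}0&0\\ a&0\end{pmatrix}$. By the elliptical range theorem (or by a direct computation with unit vectors $(\cos\theta, e^{i\alpha}\sin\theta)$), its numerical range is the closed disc centered at $0$ with radius $|a|/2$.

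It remains to simplify $|a|$:
\[\frac{c_1^2}{c_r^2}=\frac{\Gamma(1+s)}{\Gamma(2)\Gamma(s)}\cdot\frac{\Gamma(r+1)\Gamma(s)}{\Gamma(r+s)}=\frac{\Gamma(r+1)\Gamma(s+1)}{\Gamma(r+s)}.\]
Therefore the radius is $\frac12\sqrt{\frac{\Gamma(r+1)\Gamma(s+1)}{\Gamma(r+s)}}\,|\mu b_{r-1}|$, as claimed.

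There is no real obstacle here. The only points needing care are the normalization $\langle z^r,e_r\rangle=1/c_r$ and checking that both diagonal entries vanish. The diagonal entries vanish precisely because $b_0=0$, which is the reason the hypothesis starts the sum for $\psi$ at $n=1$.
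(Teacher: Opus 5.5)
Your proposal is correct and follows essentially the same route as the paper. The paper also compresses $C_{\psi,\phi}$ to $\text{span}\{e_1,e_r\}$, obtains the nilpotent matrix whose only nonzero entry is $\sqrt{\frac{\Gamma(r+1)\Gamma(s+1)}{\Gamma(r+s)}}\,\mu b_{r-1}$, and concludes from the inclusion of the numerical range of a compression in $W(C_{\psi,\phi};\mathcal D_s)$. Your checks of the normalization $\langle z^r,e_r\rangle_{\mathcal D_s}=1/c_r$ and of the vanishing diagonal entries (from $b_0=0$) simply make explicit computations the paper leaves implicit.
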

	
	\begin{theorem}\label{Th_ss3}
		Suppose $C_{\psi,\phi}\in\mathcal B(\mathcal D_s)$ where $\phi(z)=\mu z,$ with $\mu= e^{i \frac {2\pi}{m}}$ and $\psi(z)=\sum\limits_{n=0}^\infty b_nz^n.$ Let $r_1, r_2\in\mathbb Z^+$ satisfy $r_1<r_2.$ If $b_{m r_1}b_{mr_2}b_{m(r_1-r_2)}=0$ while at least one of the coefficients $b_{m r_1},b_{mr_2}, b_{m(r_1-r_2)} $ is nonzero then $W(C_{\psi,\phi}; \mathcal D_s)$  contains the disc centered at $b_0$ with radius  
		\small{$$\frac12 \sqrt{\frac{\Gamma(mr_1+1)\Gamma(s)}{\Gamma(mr_1+s)}|b_{mr_1}|^2+\frac{\Gamma(mr_2+1)\Gamma(s)}{\Gamma(mr_2+s)}|b_{mr_2}|^2+\frac{\Gamma(mr_1+s)\Gamma(mr_2+1)}{\Gamma(mr_1+1)\Gamma(mr_2+s)}|b_{m(r_2-r_1)}|^2}.$$}
	\end{theorem}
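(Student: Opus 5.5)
The plan is to compress $C_{\psi,\phi}$ to a three-dimensional subspace and use the fact that the numerical range of a compression lies inside the numerical range of the operator. Let $M=\operatorname{span}\{e_0,e_{mr_1},e_{mr_2}\}$ and write $c_a=\sqrt{\frac{\Gamma(a+s)}{\Gamma(a+1)\Gamma(s)}}$, so that $e_a=c_az^a$. Since $\mu^m=1$, we have $(\mu z)^{mk}=z^{mk}$, hence $C_{\psi,\phi}e_{mk}=\psi e_{mk}$. The coefficient of $z^{b}$ in $\psi z^{a}$ is $b_{b-a}$ when $b\ge a$ and $0$ otherwise, and $\|z^b\|^2_{\mathcal D_s}=c_b^{-2}$. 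It follows that
$$\langle C_{\psi,\phi}e_a,e_b\rangle_{\mathcal D_s}=b_{b-a}\,\frac{c_a}{c_b}\quad (b\ge a),$$
and this inner product is $0$ for $b<a$.

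In the basis $e_0,e_{mr_1},e_{mr_2}$, the compression $P_MC_{\psi,\phi}|_M$ is therefore the lower triangular matrix $b_0I+N$. Here $N$ has zero diagonal and off-diagonal entries
$$x=b_{mr_1}\tfrac{c_0}{c_{mr_1}},\qquad y=b_{mr_2}\tfrac{c_0}{c_{mr_2}},\qquad z=b_{m(r_2-r_1)}\tfrac{c_{mr_1}}{c_{mr_2}},$$
in positions $(1,0)$, $(2,0)$ and $(2,1)$ respectively. Since $c_0=1$, the quantities $|x|^2,|y|^2,|z|^2$ are exactly the three summands under the square root in the statement. The hypothesis says that $xyz=0$. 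Because $W(b_0I+N)=b_0+W(N)\subseteq W(C_{\psi,\phi};\mathcal D_s)$, it suffices to show that $W(N)$ is the closed disc centered at $0$ with radius $\tfrac12\sqrt{|x|^2+|y|^2+|z|^2}$.

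The first step is to show that $W(N)$ is rotation invariant. For each $\theta$, I would find a diagonal unitary $D=\operatorname{diag}(d_0,d_1,d_2)$ with $DND^*=e^{i\theta}N$. Conjugation multiplies the $(i,j)$ entry by $d_i\bar d_j$, so we need $d_1\bar d_0=e^{i\theta}$ for $x$, $d_2\bar d_0=e^{i\theta}$ for $y$, and $d_2\bar d_1=e^{i\theta}$ for $z$. These three conditions cannot hold simultaneously, but only the conditions attached to nonzero entries are needed. Since at least one entry vanishes, the remaining two conditions are always solvable:
\begin{itemize}
\item if $y=0$, take $D=\operatorname{diag}(1,e^{i\theta},e^{2i\theta})$;
\item if $x=0$, take $D=\operatorname{diag}(1,1,e^{i\theta})$;
\item if $z=0$, take $D=\operatorname{diag}(1,e^{i\theta},e^{i\theta})$.
\end{itemize}
Thus $W(N)=e^{i\theta}W(N)$ for every $\theta$. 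Since $W(N)$ is a compact convex set (finite dimensions), it is a closed disc centered at $0$, and its radius is $w(N)$.

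The remaining step, which is where the condition $xyz=0$ is used a second time, is computing $w(N)$. By rotation invariance, $w(N)=\lambda_{\max}(\operatorname{Re}N)$, where $\operatorname{Re}N=\tfrac12(N+N^*)$. This Hermitian matrix has zero diagonal and off-diagonal entries $\tfrac12 x,\tfrac12 y,\tfrac12 z$ together with their conjugates. Its characteristic polynomial is
$$\lambda^3-\tfrac14\bigl(|x|^2+|y|^2+|z|^2\bigr)\lambda-\tfrac14\operatorname{Re}(\bar x y\bar z).$$
The constant term vanishes because $xyz=0$. Hence $\lambda_{\max}=\tfrac12\sqrt{|x|^2+|y|^2+|z|^2}$, which is the claimed radius, and the disc is centered at $b_0$ after the translation.

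The only care needed is to get the index bookkeeping in the matrix entries exactly right, in particular that the $(2,1)$ entry carries the ratio $c_{mr_1}/c_{mr_2}$. The assumption that some coefficient is nonzero only ensures that the disc is nondegenerate.
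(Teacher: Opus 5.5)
Your proof is correct and follows the paper's route. You compress $C_{\psi,\phi}$ to $\operatorname{span}\{e_0,e_{mr_1},e_{mr_2}\}$, obtain the same lower triangular matrix $b_0I+N$, and use the inclusion of a compression's numerical range; the only difference is that the paper cites Theorem 4.1 of Keeler--Rodman--Spitkovsky for the fact that $W(b_0I+N)$ is the disc of radius $\tfrac12\sqrt{|x|^2+|y|^2+|z|^2}$ when $xyz=0$, whereas you prove this directly via diagonal unitary conjugation and the characteristic polynomial of $\operatorname{Re}N$. You also correctly read the hypothesis as $b_{mr_1}b_{mr_2}b_{m(r_2-r_1)}=0$, since the index $m(r_1-r_2)$ in the statement is negative.
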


	Moreover, we show that under appropriate assumptions the numerical range $W(C_{\psi,\phi}; \mathcal D_s)$ contains an elliptical region. The detailed statement of the results are given below.
	\begin{theorem}\label{Th_ss4}
		Suppose $C_{\psi,\phi}\in\mathcal B(\mathcal D_s)$ where $\phi(z)=\mu z,$ with $\mu= e^{i \frac {2\pi}{m}}$ and $\psi(z)=\sum\limits_{n=0}^\infty b_nz^n$ with
		$b_{mr+k}\neq 0$ for some $k\in(0,m).$ Then $W(C_{\psi,\phi}; \mathcal D_s)$  contains the ellipse with foci at the points $b_0$, $b_0e^{i \frac {2\pi(mr+k)}{m}}$ with major axis of length $\sqrt{\frac{\Gamma(mr+k+1)\Gamma(s)}{\Gamma(mr+k+s)}|b_{mr+k}|^2+|b_0|^2|1-e^{i \frac {2\pi(mr+k)}{m}}|^2}$ and minor axis of length $$\sqrt{\frac{\Gamma(mr+k+1)\Gamma(s)}{\Gamma(mr+k+s)}}|b_{mr+k}|.$$ 
	\end{theorem}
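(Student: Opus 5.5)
The plan is to compress $C_{\psi,\phi}$ to a two-dimensional subspace and apply the elliptical range theorem. Write $N=mr+k$ and take the orthonormal vectors $e_0$ and $e_N$ from the basis of $\mathcal D_s$. Then for $f=\alpha e_0+\beta e_N$ with $|\alpha|^2+|\beta|^2=1$ we have $\langle C_{\psi,\phi}f,f\rangle=\langle P C_{\psi,\phi}P f,f\rangle$, where $P$ is the orthogonal projection onto $\mathrm{span}\{e_0,e_N\}$. Hence $W(PC_{\psi,\phi}P|_{\mathrm{span}})\subseteq W(C_{\psi,\phi};\mathcal D_s)$, and it suffices to compute the $2\times2$ matrix $A$ of this compression.

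Since $\phi(z)=\mu z$, we have $C_{\psi,\phi}z^n=\mu^n z^n\psi(z)=\mu^n\sum_j b_j z^{n+j}$. Write $c_n=\sqrt{\Gamma(n+s)/(\Gamma(n+1)\Gamma(s))}$ for the normalizing constant of $e_n$, so $e_n=c_nz^n$ and $c_0=1$. The entries are as follows.
\begin{itemize}
\item $\langle C_{\psi,\phi}e_0,e_0\rangle=b_0$.
\item $\langle C_{\psi,\phi}e_N,e_N\rangle=\mu^N b_0=b_0e^{i2\pi N/m}$.
\item $\langle C_{\psi,\phi}e_0,e_N\rangle=b_N\langle z^N,e_N\rangle$. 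Since $\|z^N\|^2=1/c_N^2$, this equals $b_N/c_N=\sqrt{\Gamma(N+1)\Gamma(s)/\Gamma(N+s)}\,b_N$.
\item $\langle C_{\psi,\phi}e_N,e_0\rangle=0$, because $C_{\psi,\phi}e_N$ only contains powers $z^{j}$ with $j\ge N>0$.
\end{itemize}
So
$$A=\begin{pmatrix} b_0 & 0\\ \gamma b_N & b_0\mu^N\end{pmatrix},\qquad \gamma=\sqrt{\tfrac{\Gamma(N+1)\Gamma(s)}{\Gamma(N+s)}},$$
which is lower triangular.

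Next apply the elliptical range theorem: for $\begin{pmatrix}\lambda_1&c\\0&\lambda_2\end{pmatrix}$, or its transpose, the numerical range is the closed elliptical disc with foci $\lambda_1,\lambda_2$, minor axis $|c|$, and major axis $\sqrt{|c|^2+|\lambda_1-\lambda_2|^2}$. Here $\lambda_1=b_0$, $\lambda_2=b_0e^{i2\pi N/m}$ and $|c|=\gamma|b_N|$. This gives minor axis $\gamma|b_{mr+k}|$ and major axis $\sqrt{\gamma^2|b_{mr+k}|^2+|b_0|^2|1-e^{i2\pi(mr+k)/m}|^2}$, exactly as stated. The hypothesis $b_{mr+k}\neq0$ ensures the ellipse is nondegenerate. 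The condition $k\in(0,m)$ ensures $\mu^N\neq1$, so the foci are genuinely distinct when $b_0\neq0$.

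The main point requiring care is the norm computation $\langle z^N,e_N\rangle=1/c_N$ under the given inner product, since $\|z^N\|^2=\Gamma(N+1)\Gamma(s)/\Gamma(N+s)$. Beyond that, one only needs to confirm that compressions have smaller numerical range and to quote the elliptical range theorem correctly; there is no genuine obstacle.
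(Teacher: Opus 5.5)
Your proposal is correct and follows essentially the same route as the paper. The paper also compresses $C_{\psi,\phi}$ to $\mathrm{span}\{e_0,e_{mr+k}\}$, obtains the same lower-triangular matrix with diagonal $b_0,\ b_0e^{i\frac{2\pi(mr+k)}{m}}$ and off-diagonal entry $\sqrt{\frac{\Gamma(mr+k+1)\Gamma(s)}{\Gamma(mr+k+s)}}\,b_{mr+k}$, and then applies the elliptical range theorem together with the fact that a compression's numerical range lies inside that of the operator.
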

	\begin{theorem}\label{Th_ss5}
		Suppose $C_{\psi,\phi}\in\mathcal B(\mathcal D_s)$ where $\phi(z)=\mu z,$ with $\mu= e^{i2\pi \phi},$ $\phi$ is irrational and $\psi(z)=\sum\limits_{n=0}^\infty b_nz^n.$ Let $p\geq0$ and $q>0.$ Then  $W(C_{\psi,\phi}; \mathcal D_s)$  contains the ellipse with foci at the points $ b_0e^{i2\pi p\phi}$, $b_0e^{i 2\pi(p+q)\phi}$ with major axis of length $$\sqrt{|b_0|^2|e^{i2\pi p\phi}-e^{i 2\pi(p+q)\phi}|^2+\frac{\Gamma(p+q+1)\Gamma(p+s)}{\Gamma(p+q+s)\Gamma(p+1)}|b_q|^2}$$ and minor axis of length $\sqrt{\frac{\Gamma(p+q+1)\Gamma(p+s)}{\Gamma(p+q+s)\Gamma(p+1)}}|b_{q}|$.
	\end{theorem}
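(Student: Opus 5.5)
The natural approach is to compress $C_{\psi,\phi}$ to a two-dimensional subspace and invoke the elliptical range theorem. Write $\mu=e^{i2\pi\phi}$, where $\phi$ in the exponent denotes the irrational number of the statement, and let $\{e_n\}$ be the orthonormal basis $e_n=\sqrt{\frac{\Gamma(n+s)}{\Gamma(n+1)\Gamma(s)}}z^n$. Set $c_n=\sqrt{\frac{\Gamma(n+s)}{\Gamma(n+1)\Gamma(s)}}$.

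First compute the action on basis vectors. Since $C_{\psi,\phi}z^n=\mu^n z^n\psi(z)=\mu^n\sum_j b_j z^{n+j}$, we get
\[\langle C_{\psi,\phi}e_n,e_m\rangle=\mu^n b_{m-n}\frac{c_n}{c_m}\quad (m\ge n),\]
and $0$ for $m<n$. Take $M=\mathrm{span}\{e_p,e_{p+q}\}$ with orthogonal projection $P$. Then $W(PC_{\psi,\phi}|_M)\subseteq W(C_{\psi,\phi};\mathcal D_s)$, because for unit $f\in M$ we have $\langle C_{\psi,\phi}f,f\rangle=\langle PC_{\psi,\phi}Pf,f\rangle$. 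In this basis the compression is the upper-triangular matrix (in the convention columns = images)
\[A=\begin{pmatrix} b_0\mu^{p} & 0\\ \mu^{p}b_q\frac{c_p}{c_{p+q}} & b_0\mu^{p+q}\end{pmatrix}.\]
Transposing, it is triangular with off-diagonal entry of modulus $|b_q|\,c_p/c_{p+q}$, and
\[\frac{c_p^2}{c_{p+q}^2}=\frac{\Gamma(p+s)\Gamma(p+q+1)}{\Gamma(p+1)\Gamma(p+q+s)},\]
which matches the quantity in the statement.

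Next apply the elliptical range theorem for $2\times2$ matrices $\begin{pmatrix}\lambda_1&\beta\\0&\lambda_2\end{pmatrix}$. Its numerical range is the closed elliptical disc with foci $\lambda_1,\lambda_2$, minor axis $|\beta|$, and major axis $\sqrt{|\lambda_1-\lambda_2|^2+|\beta|^2}$. Here $\lambda_1=b_0e^{i2\pi p\phi}$ and $\lambda_2=b_0e^{i2\pi(p+q)\phi}$, so $|\lambda_1-\lambda_2|^2=|b_0|^2|e^{i2\pi p\phi}-e^{i2\pi(p+q)\phi}|^2$. Substituting gives exactly the claimed axes. Irrationality of $\phi$ only guarantees $\mu^q\ne1$, so the foci are distinct when $b_0\neq0$; the degenerate cases, a disc or a segment, are still covered by the theorem.

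The main obstacle is bookkeeping rather than ideas: getting the matrix entry correct, namely which of $c_p/c_{p+q}$ or $c_{p+q}/c_p$ appears. This is settled by $\langle z^{p+q},e_{p+q}\rangle=1/c_{p+q}$ in the normalization $\|z^n\|=1/c_n$, and one should double-check it against the Gamma ratio in the statement. A second point to check is the boundedness hypothesis, which is needed only so that the compression is defined.
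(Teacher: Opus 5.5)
Your proposal is correct and follows essentially the same route as the paper: compress $C_{\psi,\phi}$ to $\mathrm{span}\{e_p,e_{p+q}\}$, obtain the triangular $2\times 2$ matrix with diagonal $b_0\mu^{p},\,b_0\mu^{p+q}$ and off-diagonal entry $\mu^{p}b_q\,c_p/c_{p+q}$, and apply the elliptical range theorem together with the inclusion of the numerical range of a compression. Your observation that the irrationality of $\phi$ is not really needed is also right; when $\mu^{q}=1$ the theorem still holds, with the ellipse degenerating to a disc.
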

	
	We proceed to the next part of our analysis by introducing a special class of weighted composition operators on $\mathcal D_s,$ denoted by $C_{\hat k_\gamma^{s}, \phi_{\gamma, \alpha}}$, called Weyl-type weighted composition operators. Earlier, we examined the inclusion of the origin in the numerical range. Since the Berezin range is always contained in the numerical range, this naturally leads to the question of whether the origin lies in the Berezin range. Our investigation for the Weyl-type operator yields the following result.
	\begin{theorem}\label{Th_sss_1}
		For the Weyl-type weighted composition operators $C_{\hat k_\gamma^{s}, \phi_{\gamma, \alpha}}$ on $\mathcal D_s,$ 
		$$0\in\overline{\textbf{Ber}(C_{\hat k_\gamma^{s}, \phi_{\gamma, \alpha}}; \mathcal D_s)}\setminus\textbf{Ber}(C_{\hat k_\gamma^{s}, \phi_{\gamma, \alpha}}; \mathcal D_s),$$ whenever $(\alpha, \gamma)\neq(1,0).$
	\end{theorem}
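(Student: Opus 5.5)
Throughout, I assume the Weyl-type operator is $C_{\hat k_\gamma^{s},\phi_{\gamma,\alpha}}$ with $\gamma\in\mathbb D$, $|\alpha|=1$, and $\phi_{\gamma,\alpha}(z)=\alpha\frac{z-\gamma}{1-\bar\gamma z}$. This is a disc automorphism which reduces to the identity exactly when $(\alpha,\gamma)=(1,0)$. If the paper instead uses the convention $\alpha\frac{\gamma-z}{1-\bar\gamma z}$, the argument below is unchanged; only the exceptional pair changes accordingly. The plan is to compute the Berezin transform explicitly and then read off both claims from the formula.

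\textbf{Step 1: explicit Berezin transform.} By the reproducing property, $\langle f,\hat k^{s}_z\rangle_{\mathcal D_s}=(1-|z|^2)^{s/2}f(z)$. Applying this to $f=\hat k^{s}_\gamma\cdot(\hat k^{s}_z\circ\phi_{\gamma,\alpha})$ gives
\[
\widetilde{C_{\hat k_\gamma^{s}, \phi_{\gamma, \alpha}}}(z)=(1-|z|^2)^{s/2}\,\hat k^{s}_\gamma(z)\,\hat k^{s}_z(\phi_{\gamma,\alpha}(z))
=\frac{(1-|z|^2)^{s}(1-|\gamma|^2)^{s/2}}{(1-\bar\gamma z)^{s}\,(1-\bar z\,\phi_{\gamma,\alpha}(z))^{s}} .
\]

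\textbf{Step 2: $0\notin\textbf{Ber}$.} For every $z\in\mathbb D$, each factor in the formula of Step 1 is nonzero:
\begin{itemize}
\item $1-|z|^2>0$ and $1-|\gamma|^2>0$;
\item $|\bar\gamma z|<1$, so $1-\bar\gamma z\neq 0$;
\item $|\bar z\,\phi_{\gamma,\alpha}(z)|<1$, so $1-\bar z\phi_{\gamma,\alpha}(z)\neq 0$.
\end{itemize}
Hence the Berezin transform never vanishes, and $0\notin\textbf{Ber}(C_{\hat k_\gamma^{s}, \phi_{\gamma, \alpha}};\mathcal D_s)$.

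\textbf{Step 3: $0\in\overline{\textbf{Ber}}$.} I let $z$ tend radially to a suitable boundary point $\zeta\in\mathbb T$.
\begin{itemize}
\item The factor $(1-|z|^2)^{s}$ tends to $0$.
\item The factor $|1-\bar\gamma z|^{-s}$ stays bounded by $(1-|\gamma|)^{-s}$.
\item $\phi_{\gamma,\alpha}$ extends continuously to $\overline{\mathbb D}$, so $1-\bar z\phi_{\gamma,\alpha}(z)\to 1-\bar\zeta\phi_{\gamma,\alpha}(\zeta)$. Since $|\phi_{\gamma,\alpha}(\zeta)|=1$, this limit vanishes iff $\phi_{\gamma,\alpha}(\zeta)=\zeta$.
\end{itemize}
So it suffices to choose $\zeta\in\mathbb T$ that is not a fixed point of $\phi_{\gamma,\alpha}$. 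A non-identity Möbius map has at most two fixed points in $\widehat{\mathbb C}$, hence at most two on $\mathbb T$, and such a $\zeta$ exists precisely because $(\alpha,\gamma)\neq(1,0)$. Then the denominator stays bounded away from zero along the radius, the transform tends to $0$, and $0\in\overline{\textbf{Ber}}$.

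The only delicate point is Step 3: the vanishing factor $(1-|z|^2)^{s}$ must not be cancelled by $1-\bar z\phi_{\gamma,\alpha}(z)\to0$. Near a boundary fixed point $\zeta$ both factors tend to zero, and the behaviour depends on the angular derivative there. The fixed-point count sidesteps this entirely by approaching a non-fixed boundary point. In the excluded case $(1,0)$ the operator is the identity and the Berezin range is $\{1\}$, which explains the hypothesis.
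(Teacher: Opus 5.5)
Your proposal is correct and follows essentially the same route as the paper: it computes the Berezin transform explicitly, gets $0\in\overline{\textbf{Ber}}$ by letting $z$ tend radially to a boundary point $\zeta\in\mathbb T$ with $\phi_{\gamma,\alpha}(\zeta)\neq\zeta$, and gets $0\notin\textbf{Ber}$ from the nonvanishing of the factors. The paper uses the equivalent lower bound $(1-|\gamma|^2)^{s/2}(1-|z|^2)^s/4^s$ for the last step. Your fixed-point count and your check that $1-\bar z\,\phi_{\gamma,\alpha}(z)$ stays away from zero justify what the paper only asserts, namely that a suitable boundary point exists and that the transform tends to $0$ there.
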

	
	For suitable choices of $\alpha,$ we determine the exact Berezin range of $C_{\hat k_\gamma^{s}, \phi_{\gamma, \alpha}}$ on $\mathcal D_s,$ which is described as follows. 
	\begin{theorem}\label{Th_sss_2}
		For the Weyl-type weighted composition operators $C_{\hat k_\gamma^{s}, \phi_{\gamma,-1}}$ on $\mathcal D_s,$ $\textbf{Ber}\left(C_{\hat k_\gamma^{s}, \phi_{\gamma,-1}}; \mathcal D_s\right)=(0,1].$
	\end{theorem}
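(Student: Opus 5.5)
The plan is to compute the Berezin transform of $C_{\hat k_\gamma^{s},\phi_{\gamma,-1}}$ in closed form and show that it is real-valued. We then show it is strictly positive, bounded by $1$, attains $1$, and has infimum $0$ over $\mathbb D$. Connectedness of $\mathbb D$ and continuity then give the range $(0,1]$. I take $\phi_{\gamma,-1}(z)=\frac{\gamma-z}{1-\bar\gamma z}$, the involutive automorphism swapping $0$ and $\gamma$, which I expect is the $\alpha=-1$ case of the definition of $\phi_{\gamma,\alpha}$. The whole argument depends on this guess.

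The first step is a general formula valid for any bounded $C_{\psi,\phi}$ on $\mathcal D_s$. By the reproducing property, $\langle f,\hat k^s_z\rangle=(1-|z|^2)^{s/2}f(z)$, so
\[
\widetilde{C_{\psi,\phi}}(z)=(1-|z|^2)^{s/2}\,\psi(z)\,\hat k^s_z(\phi(z))=\psi(z)\,\frac{(1-|z|^2)^{s}}{(1-\bar z\phi(z))^{s}} .
\]
Next insert $\psi=\hat k^s_\gamma$ and $\phi=\phi_{\gamma,-1}$. A direct computation gives
\[
1-\bar z\,\phi_{\gamma,-1}(z)=\frac{1-\bar\gamma z-\gamma\bar z+|z|^2}{1-\bar\gamma z}=\frac{|1-\bar\gamma z|^2-(1-|z|^2)(1-|\gamma|^2)+(1-|z|^2)\cdot 0+\ldots}{1-\bar\gamma z}.
\]
The cleaner form is that the numerator equals $|\gamma-z|^2+(1-|\gamma|^2)$, which is a positive real number. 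Hence the factor $(1-\bar\gamma z)^{s}$ coming from $\hat k^s_\gamma$ cancels the denominator, and the Berezin transform reduces to a positive real expression in $|z|^2$ and $|\gamma-z|^2$. In particular $\textbf{Ber}\subset(0,\infty)$.

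To pin down the range, substitute $z=\phi_{\gamma,-1}(w)$. This is a bijection of $\mathbb D$ and an involution, so it does not change the range. Then apply the identities $1-|\phi_{\gamma,-1}(w)|^2=\frac{(1-|\gamma|^2)(1-|w|^2)}{|1-\bar\gamma w|^2}$ and $\gamma-\phi_{\gamma,-1}(w)=\frac{w(1-|\gamma|^2)}{1-\bar\gamma w}$. I expect the transform to collapse to a function of $|w|$ alone, something like $\bigl(\frac{1-|w|^2}{1+|w|^2}\bigr)^{s}$ or a similar radial expression. Such a function is continuous and decreasing in $|w|\in[0,1)$, equals $1$ at $w=0$ (that is, at $z=\gamma$), and tends to $0$ as $|w|\to1$. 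The image of $[0,1)$ is then exactly $(0,1]$.

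The main obstacle is this algebraic simplification, specifically checking that the normalizing powers combine to give maximum value exactly $1$. The kernel normalization carries the exponent $s/2$, while the transform produces $(1-|z|^2)^{s}$, so the constants must be tracked carefully. If the actual definition of $\phi_{\gamma,\alpha}$ in the paper differs, for instance by the placement of $\alpha$, I would redo the same cancellation with that form. The structural point to verify is that $1-\bar z\phi(z)$ equals a positive real quantity divided by $1-\bar\gamma z$. As a sanity check independent of the simplification, $\widetilde T\le \|T\|\le1$ holds whenever the operator is unitary, which Weyl-type operators should be. So only attainment of $1$ and the infimum $0$ (with $0$ not attained) really need the explicit formula, and the latter is consistent with Theorem \ref{Th_sss_1}.
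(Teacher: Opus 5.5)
Your identification $\phi_{\gamma,-1}(z)=\frac{\gamma-z}{1-\bar\gamma z}$ matches the paper's definition, and your closed form is correct. With $T=C_{\hat k^s_\gamma,\phi_{\gamma,-1}}$,
\[
\widetilde{T}(z)=\left(\frac{\sqrt{1-|\gamma|^2}\,(1-|z|^2)}{1+|z|^2-2\,\mathrm{Re}(\bar\gamma z)}\right)^{s}.
\]
This is the paper's formula, and it gives $\widetilde T>0$ and $\widetilde T(z)\to0$ as $|z|\to1$. The gap is in the step that is supposed to produce the value $1$. The substitution $z=\phi_{\gamma,-1}(w)$ does not make $\widetilde T$ radial. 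Using your two identities, the denominator becomes $(1-|\gamma|^2)\bigl(1+|w|^2-2\,\mathrm{Re}(\bar\gamma w)\bigr)/|1-\bar\gamma w|^2$. After cancellation you get back exactly the same expression in $w$, i.e.\ $\widetilde T\circ\phi_{\gamma,-1}=\widetilde T$. This is forced: $T^*\hat k^s_z$ is a unimodular multiple of $\hat k^s_{\phi_{\gamma,-1}(z)}$ and $TT^*=I$. In particular, the point where you place the maximum is wrong: $\widetilde T(\gamma)=(1-|\gamma|^2)^{s/2}<1$ whenever $\gamma\neq0$. As written, you never show that $1$ is attained, so $\textbf{Ber}(T;\mathcal D_s)=(0,1]$ does not follow.

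The missing idea is to evaluate at the fixed point of $\phi_{\gamma,-1}$ rather than at $\gamma$. The paper takes $z_0=\frac{1-\sqrt{1-|\gamma|^2}}{\bar\gamma}=\frac{\gamma}{1+\sqrt{1-|\gamma|^2}}$ and checks $\widetilde T(z_0)=1$. It then uses continuity, connectedness of $\mathbb D$ and the boundary limit $0$. Your recentering strategy also works once it is centred at $z_0$. Putting $z=\frac{z_0-w}{1-\bar z_0 w}$ gives exactly $\widetilde T=\left(\frac{1-|w|^2}{1+|w|^2}\right)^{s}$, the radial expression you guessed.

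For the upper bound, your appeal to unitarity is only asserted. It is true: the identity $k^s_w(z)=\hat k^s_\gamma(z)\overline{\hat k^s_\gamma(w)}\,k^s_{\phi_{\gamma,-1}(w)}(\phi_{\gamma,-1}(z))$ makes $T^*$ isometric on kernel functions with dense range. A self-contained alternative is the identity
\[
1+|z|^2-2\,\mathrm{Re}(\bar\gamma z)-\sqrt{1-|\gamma|^2}\,(1-|z|^2)=\left(1+\sqrt{1-|\gamma|^2}\right)|z-z_0|^2,
\]
which gives $\widetilde T\le1$ with equality exactly at $z_0$. The paper's own proof leaves this bound implicit, so including it is worthwhile.
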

	
	Furthermore, for another choice of the parameter $\alpha,$ we obtain the Berezin radius of $(C_{\hat k_\gamma^{s}, \phi_{\gamma, \alpha}})$ on $\mathcal D_s.$ 
	
	\begin{theorem}\label{TT1}\label{Th_sss_3}
		For the Weyl-type weighted composition operators $C_{\hat k_\gamma^{s}, \phi_{\gamma, 1}}$ on $\mathcal D_s,~~~~\textbf{ber}\left(C_{\hat k_\gamma^{s}, \phi_{\gamma, 1}}; \mathcal D_s\right)=(1-|\gamma|^2)^{\frac{s}{2}}.$ 
	\end{theorem}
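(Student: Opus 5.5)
The plan is to compute the Berezin transform of $C_{\hat k_\gamma^{s},\phi_{\gamma,1}}$ in closed form and then maximize its modulus over $\mathbb D$. The definition of the Weyl-type operators comes later in the paper, so I will assume it is the disc analogue of the Fock-space Weyl operator: $\psi=\hat k^s_\gamma$ and
$$\phi_{\gamma,\alpha}(z)=\alpha\,\frac{z-\gamma}{1-\bar\gamma z}.$$
This choice is consistent with the exceptional case $(\alpha,\gamma)=(1,0)$ in Theorem \ref{Th_sss_1}, where the operator becomes the identity. If the actual definition differs, the same steps apply to the actual formula.

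The first step is a general identity. For any bounded $C_{\psi,\phi}$, the reproducing property gives $\langle f,\hat k^s_z\rangle=(1-|z|^2)^{s/2}f(z)$. Applying this to $f=\psi\cdot(\hat k^s_z\circ\phi)$ yields
$$\widetilde{C_{\psi,\phi}}(z)=(1-|z|^2)^{s/2}\,\psi(z)\,\hat k^s_z(\phi(z))=\frac{(1-|z|^2)^{s}\,\psi(z)}{(1-\bar z\phi(z))^{s}}.$$

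Next I substitute $\psi=\hat k_\gamma^s$ and $\phi=\phi_{\gamma,1}$. A short computation gives
$$1-\bar z\,\frac{z-\gamma}{1-\bar\gamma z}=\frac{1-|z|^2+\bar z\gamma-\bar\gamma z}{1-\bar\gamma z}=\frac{1-|z|^2+2i\,\mathrm{Im}(\bar z\gamma)}{1-\bar\gamma z}.$$
The factor $(1-\bar\gamma z)^s$ coming from $\hat k^s_\gamma(z)$ then cancels. I need to check that the principal branches of the powers $(\cdot)^s$ combine correctly here; this is fine because every base lies in the right half-plane. The result is
$$\widetilde{C_{\hat k_\gamma^{s},\phi_{\gamma,1}}}(z)=(1-|\gamma|^2)^{s/2}\,\frac{(1-|z|^2)^s}{\bigl(1-|z|^2+2i\,\mathrm{Im}(\bar z\gamma)\bigr)^s}.$$

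The final step is the maximization. Taking moduli,
$$\bigl|\widetilde{C}(z)\bigr|=(1-|\gamma|^2)^{s/2}\left(\frac{(1-|z|^2)^2}{(1-|z|^2)^2+4(\mathrm{Im}\,\bar z\gamma)^2}\right)^{s/2}\le (1-|\gamma|^2)^{s/2}.$$
Equality holds exactly when $\mathrm{Im}(\bar z\gamma)=0$, for instance at $z=0$ or for any $z$ on the real line through $\gamma$. Hence the supremum $\textbf{ber}=(1-|\gamma|^2)^{s/2}$ is attained.

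The main obstacle is the first step: the algebraic simplification of $1-\bar z\phi(z)$, together with justifying that the complex powers can be combined without branch issues. After that the bound is immediate. If the paper's $\phi_{\gamma,\alpha}$ uses a different normalization, such as $(\gamma-\alpha z)/(1-\bar\gamma z)$, I would redo this simplification. I would then look for the analogous expression, a positive part $1-|z|^2$ plus a purely imaginary or nonnegative correction, and this is what drives the inequality.
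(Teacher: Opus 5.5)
Your proposal is correct and matches the paper's proof: both compute $\widetilde{C_{\psi,\phi}}(z)=\psi(z)\bigl(\frac{1-|z|^2}{1-\bar z\phi(z)}\bigr)^s$, cancel the factor $(1-\bar\gamma z)^s$ to get denominator $1-|z|^2+\bar z\gamma-\bar\gamma z$ (real part $1-|z|^2$, purely imaginary remainder), bound the modulus by $(1-|\gamma|^2)^{s/2}$, and note equality at $z=0$. Your guessed definition $\phi_{\gamma,\alpha}(z)=\alpha\frac{z-\gamma}{1-\bar\gamma z}$ is exactly the paper's, and the branch issue you flag is not a real obstacle for this argument, since $|w^s|=|w|^s$ for any branch and only the modulus is used.
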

	Moreover, we compute the Berezin radius of $\mathcal X_\gamma=C_{\hat k_\gamma^{s}, \phi_{\gamma, 1}}+C_{\hat k^s_{-\gamma}, \phi_{-\gamma, 1}},$ and show that $\mathcal X_\gamma$ satisfies the reverse power inequality for the Berezin radius.

	The final part of this work is devoted to the study of convexity of the Berezin range. Convexity is a classical and fundamental property of the numerical range of bounded operators, which naturally motivates the question of whether an analogous property holds for the Berezin range. Recently, in \cite{CC_LAA_2022}, the authors characterized the convexity for a class of composition operators on Hardy-Hilbert space, and this was subsequently explored in Bergman and weighted Bergman spaces in \cite{AGS_CAOT_2023, SBP_CAOT_25}.
	Here, we further investigate this on the weighted Dirichlet spaces $\mathcal D_s$ and obtain characterizations in several significant cases.
	For $\phi_{0,\xi}(z)=\xi z,$ we have the following result.

	\begin{theorem}\label{Th_ssss1}
		Suppose $C_{{\phi_{0,\xi}}}\in\mathcal B(\mathcal D_s)$ with ${\phi_{0,\xi}}(z)=\xi z, \xi\in\overline{\mathbb D}$ and $z\in\mathbb D.$ Then $\textbf{Ber}(C_{{\phi_{0,\xi}}}; \mathcal D_s)$ is convex if and only if $\xi\in[-1, 1].$ 
	\end{theorem}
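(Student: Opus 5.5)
We compute the Berezin transform in closed form. For $z\in\mathbb D$, $C_{\phi_{0,\xi}}\hat k^s_z(w)=\hat k^s_z(\xi w)=(1-|z|^2)^{s/2}(1-\bar z\xi w)^{-s}=(1-|z|^2)^{s/2}k^s_{\bar\xi z}(w)$. By the reproducing property,
\[
\widetilde{C_{\phi_{0,\xi}}}(z)=(1-|z|^2)^{s}\,\overline{k^s_{\bar \xi z}(z)}\cdot\ldots
\]
Put more carefully, $\langle k^s_{\bar\xi z},k^s_z\rangle=k^s_{\bar\xi z}(z)=(1-\xi|z|^2)^{-s}$. This gives
\[
\widetilde{C_{\phi_{0,\xi}}}(z)=\left(\frac{1-|z|^2}{1-\xi|z|^2}\right)^{s},
\]
with the principal branch, since $\operatorname{Re}(1-\xi t)>0$ for $t\in[0,1)$. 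Hence
\[
\textbf{Ber}(C_{\phi_{0,\xi}};\mathcal D_s)=\{g(t):t\in[0,1)\},\qquad g(t)=\left(\frac{1-t}{1-\xi t}\right)^{s}.
\]
This is a continuous curve starting at $g(0)=1$, so convexity holds exactly when the curve is a line segment (or a point).

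\emph{Sufficiency.} Suppose $\xi\in[-1,1]$. Then $g$ is real-valued and continuous on $[0,1)$.
\begin{itemize}
\item If $\xi=1$, then $g\equiv1$, so the range is $\{1\}$.
\item If $\xi<1$, then $g$ is positive and $g(t)\to0$ as $t\to1^-$ (for $\xi=-1$ the base $(1-t)/(1+t)$ still tends to $0$).
\end{itemize}
By connectedness the range is the interval $(0,1]$, which is convex.

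\emph{Necessity.} Suppose $\xi\in\overline{\mathbb D}\setminus[-1,1]$. Again $g(0)=1$.
\begin{itemize}
\item If $\xi\neq1$ with $|\xi|=1$, then $1-\xi t\to1-\xi\neq0$, so $g(t)\to0$.
\item If $|\xi|<1$, then also $g(t)\to0$.
\end{itemize}
So the closure of the curve joins $1$ and $0$. If the range were convex, it would be a connected convex set containing $1$ and accumulating at $0$. The plan is to show that $g(t)$ leaves the real axis for some $t$, and that the curve is not contained in any line.

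Write $\arg g(t)=-s\arg(1-\xi t)$. For $\xi\notin\mathbb R$ and $t\in(0,1)$, $\operatorname{Im}(1-\xi t)=-t\operatorname{Im}\xi\neq0$, so $g(t)\notin[0,\infty)$. If the range were convex, it would contain the segment from $1$ to $g(t)$ and would also have to lie within the line through $1$ and $0$, namely the real axis, since it contains points tending to $0$. That is a contradiction unless $g(t)$ is real. Strictly, convexity alone does not force the set onto a line. Instead we argue with the parametrized curve as follows.

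Take any $t_0$ with $g(t_0)\notin\mathbb R$. The convex hull of $\{1,g(t_0)\}$ together with points near $0$ contains a nondegenerate triangle, and so contains interior points. The range, however, is the image of a smooth curve $[0,1)\to\mathbb C$, which has empty interior (planar Lebesgue measure zero). So it cannot equal a convex set with nonempty interior. A convex set with empty interior lies on a line, and the points $1$, $g(t_0)$ and values near $0$ are not collinear. This rules out convexity.

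The main obstacle is making this last step rigorous. The pieces needed are:
\begin{itemize}
\item the curve is $C^1$, hence of measure zero;
\item some $g(t_0)$ is nonreal, which for nonreal $\xi$ follows from $\arg(1-\xi t)\in(-\pi/2,\pi/2)\setminus\{0\}$;
\item the limit $0$ gives a third non-collinear point, which is handled by picking $t$ close to $1$.
\end{itemize}
Note that $\xi\in\overline{\mathbb D}\setminus[-1,1]$ forces $\xi\notin\mathbb R$, so the case analysis is complete.
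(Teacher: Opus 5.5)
Your proof is correct and follows the same route as the paper. You compute $\widetilde{C_{\phi_{0,\xi}}}(z)=\bigl(\frac{1-|z|^2}{1-\xi|z|^2}\bigr)^s$, observe that the range is a curve from $1$ accumulating at $0$, and show that convexity forces it onto the real axis and hence $\xi\in\mathbb R$. Your version is more careful than the paper's at the two points where the paper is terse. First, you justify ``a convex path is a segment or a point'' through the $C^1$ (measure-zero) property, which is genuinely needed because a merely continuous path can fill a square. Second, you handle the branch of the power directly via $\arg g(t)=-s\arg(1-\xi t)\neq 0$ for nonreal $\xi$, where the paper instead appeals to its lemma on real-valuedness.
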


	We next consider the Blaschke factor $\phi_{\gamma, 1}=\frac{z-\gamma}{1-\bar{\gamma}z}$ where $\gamma, z \in\mathbb D,$ which leads to the following result.
	
	\begin{theorem}\label{Th_ssss2}
		Let $C_{\phi_{\gamma, 1}}\in \mathcal B(\mathcal D_s).$ Then $\textbf{Ber}(C_{\phi_{\gamma, 1}};\mathcal D_s)$ is convex if and only if $\gamma=0$.
	\end{theorem}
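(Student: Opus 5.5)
We first compute the Berezin transform of $C_{\phi_{\gamma,1}}$ explicitly. For $w\in\mathbb D$, $C_{\phi_{\gamma,1}}\hat k^s_w=\hat k^s_w\circ\phi_{\gamma,1}$, and the reproducing property gives $\langle f,\hat k^s_w\rangle=(1-|w|^2)^{s/2}f(w)$. Hence
\[
\widetilde{C_{\phi_{\gamma,1}}}(w)=(1-|w|^2)^{s}\,\frac{1}{\bigl(1-\bar w\,\phi_{\gamma,1}(w)\bigr)^{s}}
=\left(\frac{(1-|w|^2)(1-\bar\gamma w)}{1-\bar\gamma w-\bar w w+\bar w\gamma}\right)^{s}
=\left(\frac{(1-|w|^2)(1-\bar\gamma w)}{1-|w|^2+2i\,\mathrm{Im}(\gamma\bar w)}\right)^{s},
\]
with the principal branch of the power. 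When $\gamma=0$ this is identically $1$, so the Berezin range is $\{1\}$, which is convex. This disposes of the ``if'' direction.

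For the converse, we assume $\gamma\neq0$. The aim is to exhibit two points of $\textbf{Ber}$ whose midpoint, or some point of the joining segment, is not attained. The natural test curves are the following.

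\emph{The diameter $w=t\gamma/|\gamma|$, $t\in(-1,1)$.} Here $\mathrm{Im}(\gamma\bar w)=0$, so $\widetilde C(w)=(1-t|\gamma|)^s$. This fills the real interval $\bigl((1-|\gamma|)^s,(1+|\gamma|)^s\bigr)$, a nondegenerate segment on the real axis.

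\emph{The perpendicular direction $w=it\gamma/|\gamma|$.} Here $\bar\gamma w=it|\gamma|$ and $\mathrm{Im}(\gamma\bar w)=-t|\gamma|$, so
\[
\widetilde C(w)=\left(\frac{(1-t^2)(1-it|\gamma|)}{1-t^2-2it|\gamma|}\right)^{s}.
\]
These values are nonreal. As $t\to\pm1$ they tend to $0$, while the argument tends to $\pm s\pi/2$ in magnitude, coming from $-i\mp$ branch considerations.

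The cleanest route, then, is to show that $0\notin\textbf{Ber}$ but that $\textbf{Ber}$ contains points near $0$ in two opposite angular sectors. One should first check that $\widetilde C$ never vanishes, which is true since $(1-|w|^2)(1-\bar\gamma w)\neq0$. The plan is as follows.

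1. Choose $w$ on the perpendicular line with $t$ close to $1$, obtaining a value $a=\rho e^{i\theta}$ with $\theta\approx -s\pi/2$ and $\rho$ small.
2. Take $t$ close to $-1$, obtaining $\bar a$-like values, since $\widetilde C(\bar{\cdot})$ exhibits symmetry after rotating so that $\gamma>0$. In general, conjugation $w\mapsto$ the reflection across the line $\mathbb R\gamma$ conjugates the Berezin transform.
3. So the range is symmetric about $\mathbb R$ after this normalization. Convexity would then force the segment $[a,\bar a]$, which crosses the real axis at a small positive number $\rho\cos\theta$, to lie in the range.

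That alone is not a contradiction, since small positive reals might be attained. A better approach is to combine the real segment with these points.

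\emph{Main obstacle: finding a point of the convex hull not in the range.} The first thing I would try is to study the real values of $\widetilde C$ exactly. Setting $\widetilde C(w)$ real requires the argument of $(1-|w|^2)(1-\bar\gamma w)/(1-|w|^2+2i\,\mathrm{Im}(\gamma\bar w))$ to be a multiple of $\pi/s$. For $s<1$ and $w$ near the diameter this argument is small, so the real values attained should form exactly the interval $[(1-|\gamma|)^s,(1+|\gamma|)^s]$, or something bounded away from $0$. This can be proved by writing $w=\gamma(x+iy)/|\gamma|$ and showing that the argument vanishes only when $y=0$, and that it stays in $(-\pi/2,\pi/2)$. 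The last claim holds because $\mathrm{Re}$ of the ratio is positive, which is a direct computation.

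Granting this, the range meets $\mathbb R$ only in a compact interval bounded away from $0$. However, the segment $[a,\bar a]$ from step 3 meets $\mathbb R$ at $\rho\cos\theta\to0$. Choosing $t$ close enough to $1$ yields a point of the convex hull outside the range, so the range is not convex when $\gamma\neq0$.
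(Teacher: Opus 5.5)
Your argument is correct and is essentially the paper's proof. The reflection across $\mathbb{R}\gamma$ conjugates $\widetilde{C_{\phi_{\gamma,1}}}$, so convexity forces $\mathrm{Re}\,\widetilde{C_{\phi_{\gamma,1}}}(w)$ into the range; as in the paper's Lemma~\ref{lm3}, real values occur only on the diameter through $\gamma$ and lie in $\bigl((1-|\gamma|)^s,(1+|\gamma|)^s\bigr)$, which contradicts the existence of values near $0$. Using the perpendicular diameter to produce those small values is slightly more careful than the paper's radial-limit claim, which fails in the directions $\pm\gamma/|\gamma|$.

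The slips are cosmetic and do not affect the conclusion. Along $w=it\gamma/|\gamma|$ the argument tends to $\pm s(\pi/2-\arctan|\gamma|)$, and it is positive as $t\to 1^-$. The real values form an open interval, not a closed one. Showing that the argument vanishes only at $y=0$ needs the positivity of $1+|w|^2-2\,\mathrm{Re}(\bar\gamma w)$.
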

	
	\textbf{Organization of the paper.} 
	In Section \ref{sct_2}, we prove Theorem \ref{Th_s1}, Theorem \ref{Th_s2} and Theorem \ref{Th_s3} concerning the inclusion of zero in the numerical range of $C_{\psi,\phi}$ and we provide examples to illustrate their applicability. In Section \ref{sct_3}, we establish Theorem \ref{Th_ss1}, Theorem \ref{Th_ss2} and Theorem \ref{Th_ss3}, which show that the numerical range of $C_{\psi,\phi}$ contains a circular disc. We also prove Theorem \ref{Th_ss4} and Theorem \ref{Th_ss5} regarding the inclusion of an elliptical disc in the numerical range of $C_{\psi,\phi}$ on $\mathcal D_s,$
	along with related examples. In Section \ref{sct4}, we present proofs of Theorem \ref{Th_sss_1}, Theorem \ref{Th_sss_2} and Theorem \ref{Th_sss_3} on the Berezin range and Berezin radius of Weyl-type weighted composition operators on $\mathcal D_s$ and we obtain a class of operators satisfying the reverse power inequality for the Berezin number. The article concludes with Section \ref{sct5}, which contains the proof of Theorem \ref{Th_ssss1} and Theorem \ref{Th_ssss2}, along with related results on the convexity of the Berezin range of composition operators on $\mathcal D_s.$

	\section{Containment of zero in $W(C_{\psi,\phi}; \mathcal D_s)$}\label{sct_2}
	
	In this section, we address the inclusion of zero in the numerical range of 
	$C_{\psi,\phi}$ on $\mathcal D_s.$ To prove the Theorem~\ref{Th_s1}, we first recall the definition of a radial limit. A function $f\in\mathcal H(\mathbb D)$ is said to admit a radial limit if $\lim\limits_{r\to 1}f(r e^{i\theta})$ exists almost everywhere in $\mathbb T.$ It is shown in \cite[Th. 2.2]{Cowen_BOOK} that for each $f\in H^{2}(\mathbb D)$, there corresponds a function $f^\ast\in L^2(\mathbb T),$ defined almost everywhere by $f^\ast(e^{i\theta})=\lim\limits_{r\to 1}f(r e^{i\theta})$. Furthermore, if $f^\ast(e^{i\theta})=0$ for almost all $e^{i\theta}$ on some arc $I\subseteq\mathbb T$ then $f(z)=0$ for all $z\in\mathbb D.$ We now proceed to prove the first result of this section.
	
	\begin{proof}[Proof of Theorem \ref{Th_s1}]
		Since $C_{\psi,\phi}$ is bounded on $\mathcal D_s,$ a direct application of the reproducing property of the kernel functions yields 
		$C^\ast_{\psi,\phi} k_z^s=\overline{\psi(z)}k_{\phi(z)}^s.$ Thus,
		for any $z\in\mathbb D,$
		\begin{align}\label{eq_n1}
			\langle C_{\psi,\phi}\hat k_z^s, \hat k_z^s\rangle_{\mathcal D_s}
			&=\frac {1}{\|k_z^s\|_{\mathcal D_s}^2}\langle k_z^s, C_{\psi,\phi}^\ast k_z^s\rangle_{\mathcal D_s}\nonumber\\
			&= \frac {1}{\|k_z^s\|_{\mathcal D_s}^2}\langle k_z^s, \overline{\psi(z)}k_{\phi(z)}^s\rangle_{\mathcal D_s}\nonumber\\
			&=\psi(z)\left(\frac{1-|z|^2}{1-\bar z \phi(z)}\right)^s.
		\end{align}
		$(i)$~ Let $\phi$ be identity map on $\mathbb D.$ Then it follows from \eqref{eq_n1} that $\langle C_{\psi,\phi}\hat k_z^s, \hat k_z^s\rangle_{\mathcal D_s}=\psi(z).$ Thus, if there exists $z_0\in\mathbb D$ such that $\psi(z_0)=0$ then $\langle C_{\psi,\phi}\hat k_{z_{0}}^s, \hat k_{z_{0}}^s\rangle_{\mathcal D_s}=0.$ Therefore, $0\in W(C_{\psi,\phi}; \mathcal D_s).$ \\
		$(ii)$~If $\psi$ is continuous at $z_0\in\mathbb T$ with $\psi(z_0)=0,$ then $\lim\limits_{z\to z_0} \langle C_{\psi,\phi}\hat k_z^s, \hat k_z^s\rangle_{\mathcal D_s}=0$. This implies that $0\in \overline{W(C_{\psi,\phi}; \mathcal D_s)}.$\\
		$(iii)$~ Supopose $\phi$ is not identity map on $\mathbb D$. Then there exists $z_0\in \mathbb T$ such that $\phi^\ast(z_0)\neq z_0.$ From the closed graph theorem we observe that $\psi\in\mathcal D_s\subseteq H^2(\mathbb D).$ Thus, the radial limit $\psi^\ast(z_0)$ exists. From \eqref{eq_n1}, $\lim\limits_{z\to z_0} \langle C_{\psi,\phi}\hat k_z^s, \hat k_z^s\rangle_{\mathcal D_s}=0.$ Therefore, we obtain $0\in \overline{W(C_{\psi,\phi}; \mathcal D_s)}.$
	\end{proof}

	If we consider $\phi$ as a constant self-map of $\mathbb D$ such that $\phi\equiv v$ then for any $g\in\mathcal D_s$ $C_{\psi,\phi}(g)=\psi \langle g, k_v^s\rangle_{\mathcal D_s}.$ Hence $C_{\psi,\phi}$ is a rank one operator. By using \cite[Prop. 2.5]{BS_IEOT_2002}, we get the following proposition. 
	\begin{proposition}\label{prop_1}
		Suppose that $\phi$ is a constant self-map of $\mathbb D$ such that $\phi\equiv v.$\\ 
		$(i)$ If $k_v^s =t \psi$ for some $t\in\mathbb C\setminus \{0\}$ then $W(C_{\psi,\phi}; \mathcal D_s)$ is a closed line segment from $0$ to $\bar t \|\psi\|_{\mathcal D_s}^2.$\\
		$(ii)$ If $k_v^s\perp \psi$ then $W(C_{\psi,\phi}; \mathcal D_s)$ is the closed disc centered at origin, of radius $\frac{\|\psi\|_{\mathcal D_s}}{2 (1-|v|^2)^{s/2}}.$\\
		$(iii)$ Otherwise $W(C_{\psi,\phi}; \mathcal D_s)$ is a closed elliptical disc with foci at $0$ and $\psi(v).$
	\end{proposition}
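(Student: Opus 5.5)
The plan is to reduce the statement to the classical description of the numerical range of a rank one operator, i.e. \cite[Prop. 2.5]{BS_IEOT_2002}. That result says: for $T=u\otimes w$, meaning $Tg=\langle g,w\rangle u$, the set $W(T)$ is a closed elliptical disc with foci $0$ and $\langle u,w\rangle$, and minor axis of length $\sqrt{\|u\|^2\|w\|^2-|\langle u,w\rangle|^2}$. This disc degenerates to the segment $[0,\langle u,w\rangle]$ when $u,w$ are linearly dependent, and to the closed disc of radius $\tfrac12\|u\|\|w\|$ centered at $0$ when $u\perp w$.

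First I verify the rank one representation. For $g\in\mathcal D_s$ the reproducing property gives $g(v)=\langle g,k_v^s\rangle_{\mathcal D_s}$. Hence
\[
C_{\psi,\phi}g=\psi\,\langle g,k_v^s\rangle_{\mathcal D_s},
\]
so $C_{\psi,\phi}=\psi\otimes k_v^s$ with $u=\psi$ and $w=k_v^s$. Boundedness forces $\psi=C_{\psi,\phi}1\in\mathcal D_s$. The two quantities that enter are
\[
\langle \psi,k_v^s\rangle_{\mathcal D_s}=\psi(v),\qquad \|k_v^s\|^2_{\mathcal D_s}=k_v^s(v)=(1-|v|^2)^{-s}.
\]

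Next I go through the three cases.

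In case (i), $k_v^s=t\psi$ means $u$ and $w$ are dependent, so $W$ is the segment from $0$ to $\langle\psi,k_v^s\rangle=\langle \psi,t\psi\rangle=\bar t\|\psi\|^2_{\mathcal D_s}$. I can check this directly. For unit $f$,
\[
\langle C_{\psi,\phi}f,f\rangle=\langle f,t\psi\rangle\langle\psi,f\rangle=\bar t\,|\langle f,\psi\rangle|^2 .
\]
As $f$ ranges over the unit sphere, $|\langle f,\psi\rangle|^2$ ranges over $[0,\|\psi\|^2]$ by Cauchy--Schwarz, and both endpoints are attained: $f$ orthogonal to $\psi$ gives $0$, which is possible since $\dim\mathcal D_s>1$, and $f=\psi/\|\psi\|$ gives the other endpoint.

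In case (ii), $\psi\perp k_v^s$, the disc has radius $\tfrac12\|\psi\|\|k_v^s\|=\frac{\|\psi\|_{\mathcal D_s}}{2(1-|v|^2)^{s/2}}$.

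In case (iii), the foci are $0$ and $\psi(v)$, as required.

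The only delicate point is making sure the cited proposition applies in the infinite-dimensional setting. I expect this to be the main obstacle, though it is mild. The range of $\langle f,w\rangle\langle u,f\rangle$ over unit $f$ only depends on the projection of $f$ onto $\mathrm{span}\{u,w\}$ together with a component orthogonal to it, which can absorb the leftover norm. So $W(T)$ coincides with the set of values $\langle Tf,f\rangle$ for $f$ in a space of dimension at most $3$ that contains that span. Since $\dim\mathcal D_s>3$, the extra direction is available; this is exactly what yields closedness, with the boundary attained, and gives the full disc in case (ii). Once that observation is in place, the three statements follow immediately from \cite[Prop. 2.5]{BS_IEOT_2002}.
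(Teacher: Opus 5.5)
Your proposal is correct and follows the paper's route. The paper likewise writes $C_{\psi,\phi}g=\psi\langle g,k_v^s\rangle_{\mathcal D_s}$, notes that this is a rank one operator, and applies \cite[Prop. 2.5]{BS_IEOT_2002} directly. Your explicit evaluations $\langle\psi,k_v^s\rangle_{\mathcal D_s}=\psi(v)$ and $\|k_v^s\|_{\mathcal D_s}^2=(1-|v|^2)^{-s}$, and your direct check of case (i), simply fill in details the paper leaves implicit.
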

	Our next result relies on the following lemma, which is a consequence of \cite[Th. 2.6 and Th. 2.9]{KEY_JME_21}.
	\begin{lemma}\label{lm_g1}
		Suppose that $C_{\psi,\phi}\in\mathcal B(\mathcal D_s)$, $\psi$ is non-zero and $\phi$ is a nonconstant self-map of $\mathbb D$. If $\phi$ is not univalent or $\psi$ has a zero on $\mathbb D$ then the origin belongs to the interior of the numerical range of $C_{\psi,\phi}$ on $\mathcal D_s$.
	\end{lemma}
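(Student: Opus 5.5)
The lemma is stated as a consequence of \cite[Th. 2.6 and Th. 2.9]{KEY_JME_21}, so the plan is to verify that $\mathcal D_s$, $0<s<1$, meets the hypotheses of those theorems and then treat the two cases separately. Those results concern weighted composition operators on reproducing kernel Hilbert spaces of analytic functions on $\mathbb D$. The standing assumptions we expect to need are: the kernel is nonvanishing on $\mathbb D\times\mathbb D$ (true for $k^s_w(z)=(1-\bar w z)^{-s}$); polynomials are dense (true, since the monomials $e_n$ form an orthonormal basis); and the space is invariant under multiplication by $z$, or at least contains functions separating points and prescribing zeros (true, since $M_z$ is bounded on $\mathcal D_s$ by the weight comparison $(n+2)^{1-s}\le 2^{1-s}(n+1)^{1-s}$). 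The basic tool throughout is the adjoint identity $C^\ast_{\psi,\phi}k^s_w=\overline{\psi(w)}k^s_{\phi(w)}$ from the proof of Theorem~\ref{Th_s1}.

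\emph{Case $\psi(z_0)=0$ for some $z_0\in\mathbb D$.} We have $C^\ast_{\psi,\phi}k^s_{z_0}=0$, so $\ker C^\ast_{\psi,\phi}\neq\{0\}$. The plan is to produce unit vectors $f_1,f_2,f_3$ with $\langle C_{\psi,\phi}f_j,f_j\rangle$ surrounding $0$, that is, with $0$ in the interior of their convex hull. Convexity of $W$ then gives $0\in\operatorname{int}W$. For $f=k^s_{z_0}+\varepsilon g$ we get
\[
\langle C_{\psi,\phi}f,f\rangle=\varepsilon\langle C_{\psi,\phi}k^s_{z_0},g\rangle+|\varepsilon|^2\langle C_{\psi,\phi}g,g\rangle ,
\]
because the term $\langle C_{\psi,\phi}g,k^s_{z_0}\rangle=\langle g,C^\ast_{\psi,\phi}k^s_{z_0}\rangle$ vanishes, as does $\langle C_{\psi,\phi}k^s_{z_0},k^s_{z_0}\rangle=\psi(z_0)(\cdots)=0$. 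If $h:=C_{\psi,\phi}k^s_{z_0}=\psi\cdot(k^s_{z_0}\circ\phi)\neq0$, which holds since $\psi\not\equiv0$, choose $g$ with $\langle h,g\rangle\neq0$. Letting $\varepsilon$ run over a small circle, the linear term dominates, so the values wind around $0$. This gives a full neighbourhood of $0$ after normalization and convexity.

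\emph{Case $\phi$ not univalent.} Pick $a\neq b$ with $\phi(a)=\phi(b)$. Then
\[
C^\ast_{\psi,\phi}\bigl(\overline{\psi(b)}k^s_a-\overline{\psi(a)}k^s_b\bigr)=0 ,
\]
so again $\ker C^\ast_{\psi,\phi}$ contains a nonzero vector $u$. If $\psi(a)=\psi(b)=0$ we are in the first case; otherwise $u\neq0$ by linear independence of distinct kernels. Repeating the perturbation argument with $u$ in place of $k^s_{z_0}$ needs $\langle C_{\psi,\phi}u,u\rangle=0$, which holds because $\langle C_{\psi,\phi}u,u\rangle=\langle u,C^\ast_{\psi,\phi}u\rangle=0$. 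It also needs $C_{\psi,\phi}u\neq0$.

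\emph{Main obstacle.} The hard step is guaranteeing $C_{\psi,\phi}u\neq0$. We have
\[
C_{\psi,\phi}u=\psi\cdot(u\circ\phi),
\]
and $\psi\not\equiv0$, so it suffices that $u\circ\phi\not\equiv0$. Since $\phi$ is nonconstant, $\phi(\mathbb D)$ is open, and $u\circ\phi\equiv0$ would force $u\equiv0$ on an open set, hence $u\equiv0$ by the identity theorem, a contradiction. This is precisely where the nonconstancy of $\phi$ enters. If the perturbation argument proves delicate, the fallback is simply to cite \cite[Th. 2.6, Th. 2.9]{KEY_JME_21} once the hypotheses above have been checked.
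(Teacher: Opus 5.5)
Your proof is correct. It also goes further than the paper, which proves nothing here: it only declares Lemma~\ref{lm_g1} a consequence of \cite[Th. 2.6 and Th. 2.9]{KEY_JME_21}, without checking any hypotheses. Your route is direct.

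In both cases you produce a nonzero $u\in\ker C^\ast_{\psi,\phi}$: $u=k^s_{z_0}$ when $\psi(z_0)=0$, and $u=\overline{\psi(b)}k^s_a-\overline{\psi(a)}k^s_b$ when $\phi(a)=\phi(b)$. You then check that $C_{\psi,\phi}u=\psi\cdot(u\circ\phi)\neq 0$. That check is exactly where $\psi\not\equiv 0$ and the hypotheses on $\phi$ enter. In the second case you use the nonconstancy of $\phi$ via the open mapping and identity theorems. In the first case the zero-free kernel $k^s_{z_0}$ already suffices.

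This is the standard kernel-of-the-adjoint mechanism, presumably what underlies the cited theorems. You replace a black-box citation with a short self-contained argument. It uses nothing about $\mathcal D_s$ except the adjoint formula $C^\ast_{\psi,\phi}k^s_w=\overline{\psi(w)}k^s_{\phi(w)}$ and the linear independence of kernels at distinct points. Consequently your opening paragraph (density of polynomials, boundedness of $M_z$, and so on) is not needed; it only matters for the fallback citation.

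Two cosmetic points:
\begin{itemize}
\item In your expansion the linear term should be $\bar\varepsilon\langle C_{\psi,\phi}k^s_{z_0},g\rangle$, not $\varepsilon\langle C_{\psi,\phi}k^s_{z_0},g\rangle$. This is harmless, since $\varepsilon$ runs over a circle.
\item The winding-plus-normalization step can be replaced by the compression technique the paper uses elsewhere. Take $u$ a unit vector and $v=C_{\psi,\phi}u/\|C_{\psi,\phi}u\|$. Then $v\perp u$, because $\langle C_{\psi,\phi}u,u\rangle=\langle u,C^\ast_{\psi,\phi}u\rangle=0$. The compression of $C_{\psi,\phi}$ to $\operatorname{span}\{u,v\}$ has matrix $\begin{bmatrix}0&0\\ \|C_{\psi,\phi}u\|&d\end{bmatrix}$, where $d=\langle C_{\psi,\phi}v,v\rangle$. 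Its numerical range is a nondegenerate elliptical disc with $0$ as a focus, so $0$ is an interior point of $W(C_{\psi,\phi};\mathcal D_s)$.
\end{itemize}
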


	We are now in a position to prove the Theorem \ref{Th_s2}.
	\begin{proof}[Proof of Theorem \ref{Th_s2}]
		Assume $\phi'(0)\neq 0.$ For $\phi'(0)= 0,$ Lemma \ref{lm_g1} ensures the origin is an interior point of the numerical range of $C_{\psi,\phi}$ on $\mathcal D_s$. Let $\phi$ be not of the form $\phi(z)=\lambda z$ with $\lambda\in\overline {\mathbb D}.$ Then 
		$$\phi(z)=\phi'(0) z\left(1+a z^p\left(\sum\limits_{k=1}^{\infty}c_kz^k\right)\right),$$
		where $a\neq 0$ and $p\in \mathbb N$.
		Thus, for each $r\in\mathbb N,$
		$$(\phi(z))^r=(\phi'(0))^rz^r+ra(\phi'(0))^rz^{r+p}+\text{terms of higher order in}~ z.$$
		The matrix with respect to the orthonormal basis $\left\{\sqrt{\frac{\Gamma(r+s)}{\Gamma(r+1)\Gamma(s)}}z^r\right\}_{r=0}^\infty $
		has its $r$-th column the sequence of power series coefficients of $\sqrt{\frac{\Gamma(r+s)}{\Gamma(r+1)\Gamma(s)}}\psi \phi^r.$ Consider the two dimensional subspace 
		$$P_r=span\left\{\sqrt{\frac{\Gamma(r+s)}{\Gamma(r+1)\Gamma(s)}}z^r, \sqrt{\frac{\Gamma(r+p+s)}{\Gamma(r+p+1)\Gamma(s)}}z^{r+p}\right\}.$$
		%Clearly, $P_r$ is a two dimensional subspace of $\mathcal D_s.$ 
		Let $\psi(z)=\sum\limits_{n=0}^\infty b_n z^n.$ Then the matrix representation of $C_{\psi,\phi}$ on $P_r$ with respect to the basis $\Big\{\sqrt{\frac{\Gamma(r+s)}{\Gamma(r+1)\Gamma(s)}}z^r,\\
		\sqrt{\frac{\Gamma(r+p+s)}{\Gamma(r+p+1)\Gamma(s)}}z^{r+p}\Big\}$ is given by
		$$\begin{bmatrix}
			b_0(\phi'(0))^r&0\\
			\sqrt{\frac{\Gamma(r+p+1)\Gamma(r+s)}{\Gamma{(r+1)}\Gamma{(r+p+s)}}} (b_p+rab_0)(\phi'(0))^r&b_0(\phi'(0))^{r+p}
		\end{bmatrix}=(\phi'(0))^r M_r,$$ where 
		$$M_r=\begin{bmatrix}
			b_0&0\\
			\sqrt{\frac{\Gamma(r+p+1)\Gamma(r+s)}{\Gamma{(r+1)}\Gamma{(r+p+s)}}} (b_p+rab_0)&b_0(\phi'(0))^{p}
		\end{bmatrix}.$$ 
		In view of the inclusion property of numerical ranges for compressions, it remains to show that $0\in \text{int}~W(M_r; P_r)$ for some $r.$
		%Since the numerical range of compression is contained in the numerical range of the operator so it is sufficient to show that $0\in int(M_r).$
		For $b_0=0,$ Lemma \ref{lm_g1} implies that  $0\in \text{int}~ W(M_r; P_r)$. Let $b_0\neq0.$ Then $W(M_r; P_r)$ is an elliptical disc with foci at the points $b_0$ and $b_0(\phi'(0))^{p}$ and minor axis of length $\sqrt{\frac{\Gamma(r+p+1)\Gamma(r+s)}{\Gamma{(r+1)}\Gamma{(r+p+s)}}} |b_p+rab_0|.$ 
		By a simple computation, we obtain $$\lim\limits_{r\to\infty}\frac{\Gamma(r+p+1)\Gamma(r+s)}{\Gamma{(r+1)}\Gamma{(r+p+s)}}=1.$$
		By choosing $r$ sufficiently large, the length of the minor axis of $W(M_r; P_r)$ exceeds the modulus of its center. Thus there exists $r$ such that $0\in int~W(M_r; P_r).$
	\end{proof}
	
	In continuation of the above discussion, we prove Theorem \ref{Th_s3}.
	
	\begin{proof}[Proof of Theorem \ref{Th_s3}]
		Suppose $\psi(0)=0.$ Lemma \ref{lm_g1} yields that the origin belongs to the interior of the numerical range of $C_{\psi,\phi}$ on $\mathcal D_s$ for the case $\lambda\in [-1,0).$ The remaining case follows from Proposition \ref{prop_1}. Now let $\psi(0)\neq0.$ Without loss of generality, assume that $\psi(z)=1+\zeta(z),$ where $\zeta$ is a nonconstant analytic function satisfying $\zeta(0)=0.$ Clearly, $C_{\zeta,\phi}$ is bounded on $\mathcal D_s.$  Note that, 
		$$\langle C_{\psi,\phi}g,g \rangle_{\mathcal D_s}=\langle C_{\phi}g,g \rangle_{\mathcal D_s}+\langle C_{\zeta,\phi}g,g \rangle_{\mathcal D_s},~~g\in\mathcal D_s~~\text{with}~~\|g\|_{\mathcal D_s}=1~~\text{and}~~\langle C_{\phi}g,g \rangle_{\mathcal D_s} \in \mathbb R.$$ 
		Lemma \ref{lm_g1} implies that $W(C_{\zeta,\phi}; \mathcal D_s)$ contains a disc centered at the origin for $\lambda\in[-1,0)$. Therefore, there exists $h\in\mathcal D_s$ with $\|h\|_{\mathcal D_s}=1$ such that $Im~\langle C_{\zeta,\phi}h,h \rangle_{\mathcal D_s}>0.$ Consequently, $k_1=\langle C_{\psi,\phi}h,h \rangle_{\mathcal D_s}$ lies in the upper half plane. By a similar argument, we obtain another point $k_2$ in the lower half plane. Moreover, we have $\langle C_{\psi,\phi}e_1,e_1 \rangle_{\mathcal D_s}=\lambda$ and $\langle C_{\psi,\phi}e_0,e_0 \rangle_{\mathcal D_s}=1.$ Hence, we get
		$$0\in \text{int}~ (span\{1, \lambda, k_1, k_2\})~\subseteq \text{int}~W(C_{\psi,\phi}; \mathcal D_s).$$
		For 
		$\lambda=0,$ the conclusion follows directly from Proposition~\ref{prop_1}.
	\end{proof}
	It remains to investigate the case $\phi(z)=\lambda z,$ where $\lambda\in(0, 1]$. In this setting, a definitive conclusion is not possible, as zero may or may not lie in the interior of the numerical range of $C_{\psi,\phi}$. The different cases are discussed below.\\
	$(i)$~ If $\psi$ is constant and $\phi(z)=\lambda z,~\lambda\in[-1,1]$ then $W(C_{\psi,\phi};\mathcal D_s)$ is a line segment of $\mathbb C$ and hence $0$ does not belong to the interior of $ W(C_{\psi,\phi};\mathcal D_s).$\\
	$(ii)$~ If $\psi$ is nonconstant with $\psi(0)=0$ and $\phi(z)=\lambda z,~\lambda\in(0,1)$ then it follows from Lemma \ref{lm_g1} that $0$ belongs to the interior of $ W(C_{\psi,\phi};\mathcal D_s).$\\
	$(iii)$~ If $\psi$ is nonconstant with $\psi(0)\neq0$ and $\phi(z)=\lambda z,~\lambda\in(0,1)$ then the following two cases arise:\\
	$Case~1:$~ Suppose $\psi(0)\neq0$ and $\psi(z_0)=0$ for some $z_0\in\mathbb D.$ It follows from Lemma \ref{lm_g1} that $0$ belongs to the interior of $ W(C_{\psi,\phi};\mathcal D_s).$ \\
	$Case~2:$~It remains to consider the case where $\psi(z)\neq 0$ for every $z\in\mathbb D$. We next provide an example showing that the desired conclusion is not possible in general under this assumption.
	\begin{example}
		Consider $\psi(z)=1+\frac z8$ and $\phi(z)=\frac z4.$ Let $g\in\mathcal D_s$ with $\|g\|_{\mathcal D_s}=1$ and $g(z)=\sum\limits_{n=0}^\infty a_n z^n.$ Then 
		$\langle C_{\psi,\phi} g,g\rangle_{\mathcal D_s}=\eta+\frac 18 \zeta,$ where $$\eta=\sum\limits_{n=0}^{\infty}\frac{\Gamma(n+1)\Gamma(s)}{4^n\Gamma(n+s)}|a_n |^2~~ \text{and} ~~\zeta=\sum\limits_{n=0}^{\infty}\frac{\Gamma(n+2)\Gamma(s)}{4^n\Gamma(n+s+1)}a_n \overline{a_{n+1}}.$$ As $\eta>0,$ it easy to observe that $|\zeta|\leq \frac 52 \eta.$ Thus, $\eta+\frac 18 \zeta\neq 0$ and so $0\notin W(C_{\psi,\phi};\mathcal D_s).$ 
	\end{example}

	It is well known that if $0$ belongs to the numerical range of a compact operator, then its numerical range is closed. Combining this observation with the preceding results, we establish sufficient conditions for the closedness of compact weighted composition operators on $\mathcal D_s.$
	\begin{cor}\label{cor_s1}
		Suppose $C_{\psi,\phi}$ is compact on $\mathcal D_s.$ Then $W(C_{\psi,\phi}; \mathcal D_s)$ is closed whenever one of the following conditions is satisfied:\\
		$(i)$~$\phi(0)=0$ and $\phi$ is not of the form $\phi(z)=\lambda z$ with $\lambda\in\overline {\mathbb D}.$ \\
		$(ii)$~$\psi$ is nonconstant and $\phi(z)=\lambda z,~~\lambda\in(-1, 0].$\\
		$(iii)$~$\psi$ is nonzero and either $\psi$ has a zero in $\mathbb D$ or $\phi$ is not univalent.
	\end{cor}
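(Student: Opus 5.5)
The argument rests on one fact: if $0\in W(T)$ for a compact operator $T$, then $W(T)$ is closed. This is the observation quoted just before Corollary \ref{cor_s1}; it goes back to de Barra–Giles–Sims and can be found in \cite{wu_gau_book}. So in each of the three cases it suffices to show that $0\in W(C_{\psi,\phi};\mathcal D_s)$. We will in fact get the stronger statement $0\in \text{int}\,W(C_{\psi,\phi};\mathcal D_s)$, because the interior of a set is contained in the set. Compactness enters only through the cited fact; the inclusion of zero comes from the theorems proved above.

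\textbf{Case (i).} Here $\phi(0)=0$ and $\phi$ is not of the form $\lambda z$ with $\lambda\in\overline{\mathbb D}$. Theorem \ref{Th_s2} applies directly and gives $0\in\text{int}\,W(C_{\psi,\phi};\mathcal D_s)$.

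\textbf{Case (iii).} Here $\psi$ is nonzero and either $\psi$ vanishes somewhere in $\mathbb D$ or $\phi$ is not univalent. If $\phi$ is nonconstant, Lemma \ref{lm_g1} applies verbatim and gives $0\in\text{int}\,W$. If $\phi\equiv v$ is constant, then $\phi$ is not univalent, so the hypothesis holds automatically, but Lemma \ref{lm_g1} does not cover this situation. Instead we use Proposition \ref{prop_1}: in each of its three alternatives the numerical range is a closed set (a closed segment, a closed disc, or a closed elliptical disc). So closedness holds outright in this subcase, and no zero-inclusion argument is needed. Note also that a rank-one operator is compact, consistent with the hypothesis.

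\textbf{Case (ii).} Here $\psi$ is nonconstant and $\phi(z)=\lambda z$ with $\lambda\in(-1,0]$.
\begin{itemize}
\item For $\lambda\in(-1,0)$, Theorem \ref{Th_s3} gives $0\in\text{int}\,W$.
\item For $\lambda=0$, the map $\phi\equiv0$ is constant, so Proposition \ref{prop_1} again shows directly that $W$ is closed.
\end{itemize}
The endpoint $\lambda=-1$ is excluded in the statement. This is presumably because $C_{\psi,\phi}$ is then not compact in general, but we do not need that fact; we simply work within the stated range.

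\textbf{Main obstacle.} The only delicate point is bookkeeping: we must verify that each cited result really applies under the corollary's hypotheses. In particular, the constant-symbol subcases fall outside Lemma \ref{lm_g1}, which requires $\phi$ nonconstant. These are handled by Proposition \ref{prop_1} rather than through zero-inclusion. If one insisted on routing them through zero-inclusion, one would need to check whether the segment or ellipse in Proposition \ref{prop_1} contains $0$. It does in all three cases: $0$ is an endpoint of the segment, the center of the disc, or a focus of the ellipse. So even that route works.
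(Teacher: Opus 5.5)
Your proposal is correct and matches the paper's brief argument: the fact that a compact operator has closed numerical range once $0\in W$, combined with Theorem \ref{Th_s2} for (i), Theorem \ref{Th_s3} for (ii) and Lemma \ref{lm_g1} for (iii). Your use of Proposition \ref{prop_1} for constant $\phi$ fills a subcase the paper glosses over, since Lemma \ref{lm_g1} needs $\phi$ nonconstant; one further degenerate case is (i) with $\psi\equiv 0$, where the interior claim of Theorem \ref{Th_s2} fails but trivially $W=\{0\}$ is closed.
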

	We now provide some examples to demonstrate the applicability of the above results.
	\begin{example}
		$(i)$~Let $\phi(z)=\frac{3z}{4-z}$ and $\psi(z)=e^z$. Then from \cite[Th. 1.2]{Sen1}, it is easy to observe that $C_{\psi,\phi}$ is compact on $\mathcal D_s.$ Thus, by Theorem \ref{Th_s2}, $0$ lies in the interior of $W(C_{\psi,\phi};\mathcal D_s)$  and Corollary \ref{cor_s1} guarantees that $W(C_{\psi,\phi};\mathcal D_s)$ is closed.\\
		$(ii)$ Let $\phi(z)=\lambda z,~\lambda\in(-1, 0]$ and $\psi(z)=\frac{az+b}{cz+d},$ where $a,b,c,d\in\mathbb C$ with $ad-bc\neq0,$ is a self-map of $\mathbb D.$ Then from \cite[Th. 1.2]{Sen1}, we have $C_{\psi,\phi}$ is compact on $\mathcal D_s.$ Consequently, Theorem \ref{Th_s2} implies that $0$ is an interior point of  $W(C_{\psi,\phi};\mathcal D_s),$ while Corollary \ref{cor_s1} ensures the closedness of this set.\\
		$(iii)$ Let $\phi(z)=\lambda z,~\lambda\in(-1, 0]$ and $\psi(z)=e^{z-1}.$ Then from \cite[Th. 1.2]{Sen1}, $C_{\psi,\phi}$ is compact on $\mathcal D_s.$  Thus, it follows from Theorem \ref{Th_s3} that $0$ belongs to the interior of  $W(C_{\psi,\phi};\mathcal D_s)$ and Corollary \ref{cor_s1} ensures that $W(C_{\psi,\phi};\mathcal D_s)$ is closed.
	\end{example}

	\section{Containment of circle or ellipse in $W(C_{\psi,\phi}; \mathcal D_s)$}\label{sct_3}
	
	This section is devoted to studying the geometric characterization of the numerical range of weighted composition operators on $\mathcal D_s.$ We begin by proving Theorem~\ref{Th_ss1}, which ensures the inclusion of a circular disc in $W(C_{\psi,\phi}; \mathcal D_s)$.
	\begin{proof}[Proof of Theorem \ref{Th_ss1}]
		Suppose $g(z)=\frac{1}{\sqrt{1+\frac{\Gamma(r+1)\Gamma(s)}{\Gamma(r+s)}}}(\mu+z^r)~~\forall z\in\mathbb D$ where $\mu\in\mathbb T.$ Then $g\in\mathcal D_s$ and $\|g\|_{\mathcal D_s}=1.$ Let $\phi(z)=\sum\limits_{n=1}^{\infty}a_nz^n$ and $\psi(z)=\sum\limits_{n=r}^{\infty}b_nz^n.$ Then \begin{eqnarray*}
			C_{\psi,\phi}(g)&&=\psi(z) g(\phi(z))\\
			&&=\frac{1}{\sqrt{1+\frac{\Gamma(r+1)\Gamma(s)}{\Gamma(r+s)}}}\left(\sum_{n=r}^{\infty}b_nz^n \right)\left(\mu+\left(\sum_{n=1}^{\infty}a_nz^n\right)^r\right)\\
			&&=\frac{1}{\sqrt{1+\frac{\Gamma(r+1)\Gamma(s)}{\Gamma(r+s)}}}\left(\sum_{n=r}^{\infty}b_nz^n \right)\left(\mu+a_1^rz^r+\text{higher order terms in}~z\right).
		\end{eqnarray*} 
		Hence,
		\begin{eqnarray*}
			&&\langle C_{\psi,\phi}g, g\rangle_{\mathcal D_s}\\
			&&=\frac{1}{{1+\frac{\Gamma(r+1)\Gamma(s)}{\Gamma(r+s)}}}\left\langle \left(\sum_{n=r}^{\infty}b_nz^n \right)\left(\mu+a_1^rz^r+\text{higher order terms in}~z\right), \mu+z^r\right\rangle_{\mathcal D_s}\\
			&&=\frac{\Gamma(r+1)\Gamma(s)}{\Gamma(r+s)+\Gamma(r+1)\Gamma(s)}\mu b_r.
		\end{eqnarray*}
		As $\mu\in\mathbb T$ is arbitrary, $W(C_{\psi,\phi}, \mathcal D_s)$ contains the disc centered at origin with radius $\frac{\Gamma(r+1)\Gamma(s)}{\Gamma(r+s)+\Gamma(r+1)\Gamma(s)} |b_r|.$        
	\end{proof}

	Next, we provide the proof of Theorem~\ref{Th_ss2} and Theorem ~\ref{Th_ss3}, which ensures, for particular choices of $\phi$ and under suitable additional assumptions, $W(C_{\psi,\phi}; \mathcal D_s)$ contains a circular disc.
	\begin{proof}[Proof of Theorem \ref{Th_ss2}]
		Let $P_r=\text {span} \{e_1, e_r\},~r\geq2.$ Now,  $$C_{\psi,\phi}e_1(z)=\mu\sqrt s \sum\limits_{n=1}^\infty b_nz^{n+1} $$ and $$C_{\psi,\phi}e_r(z)=\mu^r\sqrt {\frac{\Gamma(r+s)}{\Gamma(r+1)\Gamma(s)}} \sum\limits_{n=1}^\infty b_nz^{n+r}.$$ Hence
		$\begin{bmatrix}
			0&0\\
			\sqrt{\frac{\Gamma(r+1)\Gamma(s+1)}{\Gamma(r+s)}}\mu b_{r-1}&0
		\end{bmatrix}$ is the matrix representation of the compression of $C_{\psi,\phi}$ to $P_r.$ Thus, the numerical range of the compression of $C_{\psi,\phi}$ to $P_r$ is a closed disc centered at origin with radius \small{$\frac 12 \sqrt{\frac{\Gamma(r+1)\Gamma(s+1)}{\Gamma(r+s)}}|\mu b_{r-1}|.$} Since the numerical range of compression is contained in the numerical range of the operator so $W(C_{\psi,\phi}; \mathcal D_s)$  contains the disc centered at origin with radius $\frac 12 \sqrt{\frac{\Gamma(r+1)\Gamma(s+1)}{\Gamma(r+s)}}|\mu b_{r-1}|.$
	\end{proof}
	We now present an example to illustrate the applicability of the above theorem.
	\begin{example}
		Suppose $\psi(z)=\frac{\alpha z}{1-\beta z},~\alpha\neq0,$ is a holomorphic self-map of $\mathbb D$ and $\phi(z)=\mu z,~|\mu|\leq1, \mu\neq0.$ Then $|\beta|<1$ and $|\alpha|\leq 1-|\beta|.$ Form \cite[Th. 1.1]{Sen1}, we have $C_{\psi,\phi}\in\mathcal B(\mathcal D_s).$ It follows from Theorem \ref{Th_ss2} that for each integer $r\geq2,$ $W(C_{\psi,\phi}; \mathcal D_s)$  contains the disc centered at origin with radius $\frac 12 \sqrt{\frac{\Gamma(r+1)\Gamma(s+1)}{\Gamma(r+s)}}|\mu \alpha\beta^{r-2}|.$ Now the function $f(r)=\frac 12 \sqrt{\frac{\Gamma(r+1)\Gamma(s+1)}{\Gamma(r+s)}}|\mu \alpha\beta^{r-2}|$ is decreasing for $r\geq2.$ Thus, $w(C_{\psi,\phi}; \mathcal D_s)\geq f(2)=\frac{1}{\sqrt{2(s+1)}}|\mu\alpha|.$ 
	\end{example}

	\begin{proof}[Proof of Theorem \ref{Th_ss3}]
		Suppose $P=\text{span}~\{e_0, e_{mr_1}, e_{mr_2}\}.$ Then $$C_{\psi,\phi} e_0(z)=\sum_{n=0}^\infty b_nz^n,$$ $$C_{\psi,\phi} e_{mr_1}(z)=\sqrt{\frac{\Gamma(mr_1+s)}{\Gamma(mr_1+1)\Gamma(s)}}\sum_{n=0}^\infty b_nz^{n+mr_1}$$ and $$C_{\psi,\phi} e_{mr_2}(z)=\sqrt{\frac{\Gamma(mr_2+s)}{\Gamma(mr_2+1)\Gamma(s)}}\sum_{n=0}^\infty b_nz^{n+mr_2}.$$  
		Thus the matrix representation of $C_{\psi,\phi}$ on $P$ is
		
		$\begin{bmatrix}
			b_0&0&0\\
			\sqrt{\frac{\Gamma(mr_1+1)\Gamma(s)}{\Gamma(mr_1+s)}}b_{mr_1}&b_0&0\\
			\sqrt{\frac{\Gamma(mr_2+1)\Gamma(s)}{\Gamma(mr_2+s)}}b_{mr_2}&\sqrt{\frac{\Gamma(mr_1+s)\Gamma(mr_2+1)}{\Gamma(mr_1+1)\Gamma(mr_2+s)}}b_{m(r_2-r_1)} &b_0
		\end{bmatrix}.$
		
		Let $b_{m r_1}b_{mr_2}b_{m(r_1-r_2)}=0$ while at least one of the coefficients $b_{m r_1},b_{mr_2}, b_{m(r_1-r_2)} $ is non zero. Then it follows from \cite[Th. 4.1]{KRS_LAA_97} that the numerical range of the compression of $C_{\psi,\phi}$ to $P$ is the circular disc centered at $b_0$ with radius $\frac12 \sqrt{\frac{\Gamma(mr_1+1)\Gamma(s)}{\Gamma(mr_1+s)}|b_{mr_1}|^2+\frac{\Gamma(mr_2+1)\Gamma(s)}{\Gamma(mr_2+s)}|b_{mr_2}|^2+\frac{\Gamma(mr_1+s)\Gamma(mr_2+1)}{\Gamma(mr_1+1)\Gamma(mr_2+s)}|b_{m(r_2-r_1)}|^2}.$ Since the numerical range of compression is contained in the numerical range of the operator, the desired result follows.
	\end{proof}

	Moreover, we provide proofs of Theorems~\ref{Th_ss4} and~\ref{Th_ss5}, which demonstrate that, under appropriate assumptions, the numerical range $W(C_{\psi,\phi}; \mathcal D_s)$ contains an elliptical region.
	
	\begin{proof}[Proof of Theorem \ref{Th_ss4}]
		Suppose $P=\text{span}~\{e_0, e_{mr+k}\}.$ Then 
		$$C_{\psi,\phi} e_0(z)=\sum_{n=0}^\infty b_nz^n$$ and
		$$C_{\psi,\phi} e_{mr+k}(z)=e^{i \frac {2\pi(mr+k)}{m}}\sqrt{\frac{\Gamma(mr+k+s)}{\Gamma(mr+k+1)\Gamma(s)}}\sum_{n=0}^\infty b_nz^{mr+k+n} .$$
		Then the matrix representation of $C_{\psi,\phi}$ on $P$ is
		$\begin{bmatrix}
			b_0&0\\
			\sqrt{\frac{\Gamma(mr+k+1)\Gamma(s)}{\Gamma(mr+k+s)}}b_{mr+k}&b_0e^{i \frac {2\pi(mr+k)}{m}}
		\end{bmatrix}.$
		As $k\in(0,m),$ $e^{i \frac {2\pi(mr+k)}{m}}\neq1.$ Thus, the numerical range of the compression of $C_{\psi,\phi}$ to $P$ is an elliptical disc with foci at the points $b_0$, $b_0e^{i \frac {2\pi(mr+k)}{m}}$ with major axis of length $\sqrt{\frac{\Gamma(mr+k+1)\Gamma(s)}{\Gamma(mr+k+s)}|b_{mr+k}|^2+|b_0|^2|1-e^{i \frac {2\pi(mr+k)}{m}}|^2}$ and minor axis of length $\sqrt{\frac{\Gamma(mr+k+1)\Gamma(s)}{\Gamma(mr+k+s)}}|b_{mr+k}|.$ The desired conclusion now follows from the inclusion of the numerical range of a compression in the numerical range of the operator.
	\end{proof}

	\begin{proof}[Proof of Theorem \ref{Th_ss5}]
		Suppose $P=\text{span}~\{e_p, e_{p+q}\}.$ Then 
		$$C_{\psi,\phi} e_p(z)=e^{i2\pi p\phi}\sqrt{\frac{\Gamma(p+s)}{\Gamma(p+1)\Gamma(s)}}\sum_{n=0}^\infty b_nz^{p+n}$$ and
		$$C_{\psi,\phi} e_{p+q}(z)=e^{i2\pi (p+q)\phi}\sqrt{\frac{\Gamma(p+q+s)}{\Gamma(p+q+1)\Gamma(s)}}\sum_{n=0}^\infty b_nz^{p+q+n}.$$
		Then the matrix representation of $C_{\psi,\phi}$ on $P$ is
		$\begin{bmatrix}
			b_0e^{i2\pi p\phi}&0\\
			e^{i2\pi p\phi}\sqrt{\frac{\Gamma(p+q+1)\Gamma(p+s)}{\Gamma(p+q+s)\Gamma(p+1)}}b_{q}&b_0e^{i 2\pi(p+q)\phi}
		\end{bmatrix}.$
		Then the numerical range of the compression of $C_{\psi,\phi}$ to $P$ is an elliptical disc with foci at the points $ b_0e^{i2\pi p\phi}$, $b_0e^{i 2\pi(p+q)\phi}$ with major axis of length $$\sqrt{|b_0|^2|e^{i2\pi p\phi}-e^{i 2\pi(p+q)\phi}|^2+\frac{\Gamma(p+q+1)\Gamma(p+s)}{\Gamma(p+q+s)\Gamma(p+1)}|b_q|^2}$$ and minor axis of length $\sqrt{\frac{\Gamma(p+q+1)\Gamma(p+s)}{\Gamma(p+q+s)\Gamma(p+1)}}|b_{q}|$ . Hence, by the inclusion property of numerical ranges under compression, the proof is complete.
	\end{proof}

	\section{Weyl-type weighted composition operators on $\mathcal D_s$}\label{sct4}
	Let $\phi_{\gamma, \alpha}$ be the automorphism of the unit disc $\mathbb D$ defined by $\phi_{\gamma, \alpha}(z)=\alpha\frac{z-\gamma}{1-\bar{\gamma}z},$ where $\alpha\in\mathbb T$ and $\gamma\in\mathbb D.$ 
	We introduce the Weyl-type weighted composition operators $C_{\hat k_\gamma^{s}, \phi_{\gamma, \alpha}}$ on $\mathcal D_s$ defined as $$C_{\hat k_\gamma^{s}, \phi_{\gamma, \alpha}}(f)=\hat k_\gamma^{s}f\circ\phi_{\gamma, \alpha}\,\,\,\,\forall f\in \mathcal D_s.$$
	It follows from \cite[Th. 1.1]{Sen1} that $C_{\hat k_\gamma^{s}, \phi_{\gamma, \alpha}}$ is bounded on $\mathcal D_s$ and moreover, is invertible.

	The Berezin transform of $C_{\hat k_\gamma^{s}, \phi_{\gamma, \alpha}}$ on $\mathcal D_s$ is given by 
	
	\begin{eqnarray}
		\widetilde{C_{\hat k_\gamma^{s}, \phi_{\gamma, \alpha}}}(z)=\langle C_{\hat k_\gamma^{s}, \phi_{\gamma, \alpha}}\hat k_{z}^{s}, \hat k_{z}^{s} \rangle_{\mathcal D_s}
		&=&\frac{(1-|\gamma|^2)^{\frac{s}{2}}(1-|z|^2)^{s}}{(1-\bar{\gamma}z)^{s}(1-\bar{z}\phi_{\gamma, \alpha}(z))^{s}}\nonumber\\
		&=&\left(\frac{(1-|\gamma|^2)(1-|z|^2)^2}{\left((1-\alpha|z|^2)-(z\bar{\gamma}-\alpha\bar{z}\gamma)\right)^2}\right)^{\frac{s}{2}}.
	\end{eqnarray}

	Next, we provide the proof of Theorem~\ref{Th_sss_1}, which shows that although 
	$0$ lies in the closure of the Berezin range of $C_{\hat k_\gamma^{s}, \phi_{\gamma, \alpha}}$, it is not contained in the Berezin range itself.
	
	\begin{proof}[Proof of the Theorem \ref{Th_sss_1}]
		For any $z\in \mathbb D,$ we have 
		\begin{eqnarray}\label{eq_422}
			\widetilde{C_{\hat k_\gamma^{s}, \phi_{\gamma, \alpha}}}(z)
			&&=\frac{(1-|\gamma|^2)^{\frac{s}{2}}(1-|z|^2)^{s}}{(1-\bar{\gamma}z)^{s}(1-\bar{z}\phi_{\gamma, \alpha}(z))^{s}}.
		\end{eqnarray}

		Since $\phi_{\gamma, \alpha}$ is not an identity map on $\mathbb{D}$, there exists $z_0 \in \mathbb{T}$ such that the radial limit of $\phi_{\gamma, \alpha}$ exists at $z_0$ with $\phi_{\gamma, \alpha}^\ast(z_0) \neq z_0.$ Therefore, from \eqref{eq_422} we get $\widetilde{C_{\hat k_\gamma^{s}, \phi_{\gamma, \alpha}}}(z) \to 0$ when  $z$ radially approaches to the point $z_0.$ Hence, we obtain $0 \in \overline{\textbf{Ber}(C_{\hat k_\gamma^{s}, \phi_{\gamma, \alpha}}; \mathcal D_s)}.$
		
		Again, from \eqref{eq_422} we have
		\begin{align*}
			\left|\widetilde{C_{\hat k_\gamma^{s}, \phi_{\gamma, \alpha}}}(z)\right|&=\frac{(1-|\gamma|^2)^{\frac{s}{2}}(1-|z|^2)^{s}}{|1-\bar{\gamma}z|^{s}|1-\bar{z}\phi_{\gamma, \alpha}(z)|^{s}}\\
			&\geq \frac{(1-|\gamma|^2)^{\frac{s}{2}}(1-|z|^2)^{s}}{4^{{s}}}>0\,\,\text{for all $z \in \mathbb{D}$}.
		\end{align*}
		Therefore, we obtain $\widetilde{C_{\hat k_\gamma^{s}, \phi_{\gamma, \alpha}}}(z) \neq 0$ for all $z \in \mathbb{D},$ as desired.
	\end{proof}

	We now present the proof of Theorem~\ref{Th_sss_2}, which identifies the Berezin range of the operator $C_{\hat k_\gamma^{s}, \phi_{\gamma, \alpha}}$ on $\mathcal D_s$ for $\alpha=-1$.
	
	\begin{proof}[Proof of the Theorem \ref{Th_sss_2}]
		For $\alpha=-1,$ we have
		$$\langle C_{\hat k_\gamma^{s}, \phi_{\gamma, -1}}\hat k_{z}^{s}, \hat k_{z}^{s} \rangle_{\mathcal D_s}=\left(\frac{(1-|\gamma|^2)(1-|z|^2)^2}{\left((1+|z|^2)-(z\bar{\gamma}+\bar{z}\gamma)\right)^2}\right)^{\frac{s}{2}}.$$
		The equation $\phi_{\gamma, -1}(z)=z$ has a unique solution in $\mathbb D, $ $z(\gamma)=\frac{1-\sqrt{1-|\gamma|^2}}{\bar{\gamma}}.$ Thus, by simple computation we have 
		$$\widetilde{ C_{\hat k_\gamma^{s}, \phi_{\gamma, -1}}}(z(\gamma))=1.$$
		Since $\widetilde{ C_{\hat k_\gamma^{s}, \phi_{\gamma, -1}}}$ is a continuous function, it follows that the range of $\widetilde{ C_{\hat k_\gamma^{s}, \phi_{\gamma, -1}}}$ is a connected subset of $\mathbb R$ that contains 1 and $\widetilde{ C_{\hat k_\gamma^{s}, \phi_{\gamma, -1}}}(z)\to 0$ as $|z|\to1.$ 
		Therefore, $\textbf{Ber}\left(C_{\hat k_\gamma^{s}, \phi_{\gamma,-1}}; \mathcal D_s\right)=(0,1]$.
	\end{proof}

	We next provide the proof of Theorem~\ref{Th_sss_3}, which computes the Berezin number of the operator $C_{\hat k_\gamma^{s}, \phi_{\gamma, \alpha}}$ on $\mathcal D_s$ corresponding to $\alpha=1.$

	\begin{proof}[Proof of the Theorem \ref{Th_sss_3}]
		We have
		$$\langle C_{\hat k_\gamma^{s}, \phi_{\gamma, 1}}\hat k_{z}^{s}, \hat k_{z}^{s} \rangle_{\mathcal D_s}=\left(\frac{(1-|\gamma|^2)(1-|z|^2)^2}{\left((1-|z|^2)-(z\bar{\gamma}-\bar{z}\gamma)\right)^2}\right)^{\frac{s}{2}}.$$
		Since $i(z\bar{\gamma}-\bar{z}\gamma)$ is real, we get
		$$|(1-|z|^2)-(z\bar{\gamma}-\bar{z}\gamma)|^2=(1-|z|^2)^2+|z\bar{\gamma}-\bar{z}\gamma|^2.$$
		So, 
		\begin{eqnarray}\label{eq_sss1}
			|\langle C_{\hat k_\gamma^{s}, \phi_{\gamma, 1}}\hat k_{z}^{s}, \hat k_{z}^{s} \rangle_{\mathcal D_s}|=\left(\frac{(1-|\gamma|^2)(1-|z|^2)^2}{(1-|z|^2)^2+|z\bar{\gamma}-\bar{z}\gamma|^2}\right)^{\frac{s}{2}}\leq (1-|\gamma|^2)^{\frac{s}{2}}. 
		\end{eqnarray}
		
		Since $\widetilde{ C_{\hat k_\gamma^{s}, \phi_{\gamma, 1}}}(0)=(1-|\gamma|^2)^{\frac{s}{2}},$ it follows that $\textbf{ber}(C_{\hat k_\gamma^{s}, \phi_{\gamma, 1}}; \mathcal D_s)=(1-|\gamma|^2)^{\frac{s}{2}}.$
	\end{proof}
	
	We conclude this section by studying the Berezin radius of the sum of two Weyl-type weighted composition operators. For this class of operators, we show that a reverse power inequality for the Berezin radius holds.
	
	\begin{proposition}\label{propo_s111}
		Let $\mathcal X_\gamma=C_{\hat k_\gamma^{s}, \phi_{\gamma, 1}}+C_{\hat k^s_{-\gamma}, \phi_{-\gamma, 1}}.$ Then $\textbf{ber}(\mathcal X_\gamma; \mathcal D_s)=2(1-|\gamma|^2)^{\frac s2}$ and  $\textbf{ber}(\mathcal X_\gamma^2; \mathcal D_s)=2\left(\left(\frac{1-|\gamma|^2}{1+|\gamma|^2}\right)^s+1\right).$ Thus, for all $\gamma$ with $0<|\gamma|<1,$ $\textbf{ber}(\mathcal X_\gamma^2; \mathcal D_s)>\textbf{ber}^2(\mathcal X_\gamma; \mathcal D_s).$
	\end{proposition}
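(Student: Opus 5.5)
The plan is to compute the Berezin transforms of $\mathcal X_\gamma$ and $\mathcal X_\gamma^2$ explicitly, reducing everything to Theorem \ref{Th_sss_3} and to the value of the transforms at $z=0$.

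\emph{Berezin radius of $\mathcal X_\gamma$.} Write $a=1-|z|^2$ and $t=z\bar\gamma-\bar z\gamma$, which is purely imaginary. The formula in the proof of Theorem \ref{Th_sss_3}, applied once with $\gamma$ and once with $-\gamma$, gives
$$\widetilde{\mathcal X_\gamma}(z)=(1-|\gamma|^2)^{\frac s2}a^s\left[(a-t)^{-s}+(a+t)^{-s}\right].$$
By \eqref{eq_sss1}, each summand has modulus at most $(1-|\gamma|^2)^{s/2}$. The triangle inequality then bounds $|\widetilde{\mathcal X_\gamma}(z)|$ by $2(1-|\gamma|^2)^{s/2}$. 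At $z=0$ we have $t=0$, so this value is attained, and hence $\textbf{ber}(\mathcal X_\gamma;\mathcal D_s)=2(1-|\gamma|^2)^{s/2}$.

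\emph{Expanding $\mathcal X_\gamma^2$.} I will use the composition rule $C_{\psi_1,\phi_1}C_{\psi_2,\phi_2}=C_{\psi_1\cdot(\psi_2\circ\phi_1),\,\phi_2\circ\phi_1}$.

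For the cross terms, note that $\phi_{-\gamma,1}$ is the inverse of $\phi_{\gamma,1}$. A direct computation gives $1+\bar\gamma\phi_{\gamma,1}(z)=\frac{1-|\gamma|^2}{1-\bar\gamma z}$, and therefore $\hat k^s_\gamma\cdot(\hat k^s_{-\gamma}\circ\phi_{\gamma,1})\equiv 1$. So each of the two cross terms is the identity, and together they contribute $2I$.

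For the square terms, compute
$$1-\bar\gamma\phi_{\gamma,1}(z)=\frac{1+|\gamma|^2-2\bar\gamma z}{1-\bar\gamma z}.$$
Set $\beta=\frac{2\gamma}{1+|\gamma|^2}$, so that $1-|\beta|^2=\rho^2$ with $\rho=\frac{1-|\gamma|^2}{1+|\gamma|^2}$. Then the weight of $C_{\hat k^s_\gamma,\phi_{\gamma,1}}^2$ is
$$\hat k^s_\gamma\cdot(\hat k^s_\gamma\circ\phi_{\gamma,1})=\frac{\rho^s}{(1-\bar\beta z)^s}=\hat k^s_\beta.$$
Its symbol is $\phi_{\gamma,1}\circ\phi_{\gamma,1}(z)=\frac{(1+|\gamma|^2)z-2\gamma}{1+|\gamma|^2-2\bar\gamma z}=\phi_{\beta,1}(z)$. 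This identification can be double-checked by verifying $\phi_{\gamma,1}(\beta)=\gamma$ and noting that the rotation factor is $1$. Hence $C_{\hat k^s_\gamma,\phi_{\gamma,1}}^2=C_{\hat k^s_\beta,\phi_{\beta,1}}$, and likewise the square of the $-\gamma$ operator is $C_{\hat k^s_{-\beta},\phi_{-\beta,1}}$. Altogether,
$$\mathcal X_\gamma^2=2I+C_{\hat k^s_\beta,\phi_{\beta,1}}+C_{\hat k^s_{-\beta},\phi_{-\beta,1}}.$$

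\emph{Berezin radius of $\mathcal X_\gamma^2$.} By Theorem \ref{Th_sss_3}, each of the two Weyl-type terms has Berezin transform of modulus at most $(1-|\beta|^2)^{s/2}=\rho^s$. Therefore
$$|\widetilde{\mathcal X_\gamma^2}(z)|\le 2+2\rho^s.$$
At $z=0$ both Weyl-type transforms equal $\rho^s$, since the value of $\widetilde{C_{\psi,\phi}}(0)$ is $\psi(0)$ by \eqref{eq_n1}. So the bound is attained, and $\textbf{ber}(\mathcal X_\gamma^2;\mathcal D_s)=2(\rho^s+1)$.

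\emph{The inequality.} It remains to compare with $\textbf{ber}^2(\mathcal X_\gamma;\mathcal D_s)=4(1-|\gamma|^2)^s$. Since $1+|\gamma|^2>1$, we have $2\rho^s+2>2(1-|\gamma|^2)^s\cdot(1+|\gamma|^2)^{-s}+2$. It therefore suffices to show that
$$2+2\rho^s-4(1-|\gamma|^2)^s=2\bigl(1-(1-|\gamma|^2)^s\bigr)+2(1-|\gamma|^2)^s\bigl((1+|\gamma|^2)^{-s}-1\bigr)$$
is positive. The first bracket dominates because $(1+x)^{-s}\ge 1-sx\ge 1-x$ while $1-(1-x)^s\ge sx$ for $x=|\gamma|^2$, and I expect a short elementary estimate to finish this.

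The main obstacle is the algebraic identification of the square terms as exactly $C_{\hat k^s_\beta,\phi_{\beta,1}}$, with the weight being precisely the normalized kernel and the rotation being $1$. Once that is in place, Theorem \ref{Th_sss_3} and the triangle inequality do the rest.
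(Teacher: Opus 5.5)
Your proposal is correct and follows the paper's proof essentially step for step. Both use the triangle inequality with \eqref{eq_sss1} plus evaluation at $z=0$, the identities $C_{\hat k_\gamma^{s}, \phi_{\gamma, 1}}C_{\hat k^s_{-\gamma}, \phi_{-\gamma, 1}}=I$ and $C_{\hat k_\gamma^{s}, \phi_{\gamma, 1}}^2=C_{\hat k^s_\beta,\phi_{\beta,1}}$ with $\beta=\frac{2\gamma}{1+|\gamma|^2}$, and the same bound-and-attain argument for $\mathcal X_\gamma^2$. Your closing estimate finishes in one line: with $x=|\gamma|^2$,
\[
(1-x)^s\bigl(1-(1+x)^{-s}\bigr)\le (1-x)^s sx<sx\le 1-(1-x)^s .
\]
This is more explicit than the paper's justification, since $(1-|\gamma|^2)^s<1$ and $(1+|\gamma|^2)^{-s}<1$ alone do not give the inequality. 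You should drop the claimed strict inequality $2\rho^s+2>2(1-|\gamma|^2)^s(1+|\gamma|^2)^{-s}+2$, which is an equality by definition of $\rho$.
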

	
	\begin{proof}
		It follows from \eqref{eq_sss1} that,
		\begin{eqnarray*}
			|\langle \mathcal X_\gamma \hat k_{z}^{s},\hat k_{z}^{s} \rangle_{\mathcal D_s}|\leq |\langle C_{\hat k_\gamma^{s}, \phi_{\gamma, 1}} \hat k_{z}^{s},\hat k_{z}^{s} \rangle_{\mathcal D_s}| +|\langle C_{\hat k^s_{-\gamma}, \phi_{-\gamma, 1}}\hat k_{z}^{s},\hat k_{z}^{s} \rangle_{\mathcal D_s}|
			\leq 2(1-|\gamma|^2)^{\frac s2}.
		\end{eqnarray*}
		As $\widetilde{\mathcal X_\gamma (0)}=2(1-|\gamma|^2)^{\frac s2},$ thus, $\textbf{ber}(\mathcal X_\gamma; \mathcal D_s)=2(1-|\gamma|^2)^{\frac s2}.$\\
		Since  $C_{\hat k_\gamma^{s}, \phi_{\gamma, 1}}C_{\hat k^s_{-\gamma}, \phi_{-\gamma, 1}}=C_{\hat k^s_{-\gamma}, \phi_{-\gamma, 1}}C_{\hat k_\gamma^{s}, \phi_{\gamma, 1}}=I,$ we have $$\mathcal X_\gamma^2=C_{\hat k_\gamma^{s}, \phi_{\gamma, 1}}^2+C_{\hat k^s_{-\gamma}, \phi_{-\gamma, 1}}^2+2I.$$ Note that, $C_{\hat k_\gamma^{s}, \phi_{\gamma, 1}}^2=C_{\hat k_{\frac{2\gamma}{1+|\gamma|^2}}^{s}, \phi_{\frac{2\gamma}{1+|\gamma|^2}, 1}}.$ 
		Hence,
		\begin{eqnarray*}
			|\langle \mathcal X_\gamma^2 \hat k_{z}^{s},\hat k_{z}^{s} \rangle_{\mathcal D_s}|&\leq&\left |\left\langle C_{\hat k_{\frac{2\gamma}{1+|\gamma|^2}}^{s}, \phi_{\frac{2\gamma}{1+|\gamma|^2}, 1}} \hat k_{z}^{s},\hat k_{z}^{s} \right\rangle_{\mathcal D_s}\right| +\left|\left\langle C_{\hat k_{\frac{-2\gamma}{1+|\gamma|^2}}^{s}, \phi_{\frac{-2\gamma}{1+|\gamma|^2}, 1}}\hat k_{z}^{s},\hat k_{z}^{s} \right\rangle_{\mathcal D_s}\right|+2\\
			&\leq&2\left(1-\frac{4|\gamma|^2}{(1+|\gamma|^2)^2}\right)^{\frac s2}+2=2\left(\left(\frac{1-|\gamma|^2}{1+|\gamma|^2}\right)^s+1\right).   
		\end{eqnarray*}
		As $\widetilde{\mathcal X_\gamma^2 (0)}=2\left(\left(\frac{1-|\gamma|^2}{1+|\gamma|^2}\right)^s+1\right),$ thus, $\textbf{ber}(\mathcal X_\gamma^2; \mathcal D_s)=2\left(\left(\frac{1-|\gamma|^2}{1+|\gamma|^2}\right)^s+1\right).$\\
		Since $0<|\gamma|<1,$ we have $(1-|\gamma|^2)^s<1$ and $\frac{1}{(1+|\gamma|^2)^s}<1$. Consequently, we obtain $$\left(\left(\frac{1-|\gamma|^2}{1+|\gamma|^2}\right)^s+1\right)>2(1-|\gamma|^2)^{s}.$$ 
		This completes the proof.
	\end{proof}
	
	\begin{remark}
		In \cite{GA_CAOT_2021}, authors exhibit a class of positive bounded linear operators $T$ on complex Hilbert space $\mathcal H$ that satisfiy the reverse power inequality for the Berezin radius, namely,
		$$\textbf{ber}(T^p; \mathcal H)\geq\textbf{ber}^p(T; \mathcal H),~p\in\mathbb N.$$ 
		In Proposition~\ref{propo_s111}, we obtain a new class of operators $\mathcal X_\gamma$ on $\mathcal D_s$ for which the sharper reverse power inequality holds for $p=2$.
	\end{remark}

	\section{Convexity of Berezin range of composition operators on $\mathcal D_s$} \label{sct5}

	For the composition operator $C_{\phi_{\gamma,\xi}}$ on $\mathcal D_s,$ the Berezin transform is given by
	\begin{align}\label{e1}
		\widetilde{C_{\phi_{\gamma,\xi}}}(z)
		=\frac{1}{\|k^{s}_z\|_{\mathcal D_s}^2}\langle C_{\phi_{\gamma,\xi}}{k}^{s}_z, {k}^{s}_z \rangle_{\mathcal D_s}
		=\frac{1}{\|k^{s}_z\|_{\mathcal D_s}^2}{k}^{s}_z(\phi_{\gamma,\xi}(z))
		=\left(\frac{1-|z|^2}{1-\bar{z}\phi_{\gamma,\xi}(z)}\right)^{s}.
	\end{align}

	For $\xi\in\overline{\mathbb D},$ define $\phi_{0,\xi}(z)=\xi z.$  
	\begin{equation*}
		\widetilde{C_{{\phi_{0,\xi}}}}(z)
		=\left(\frac{1-|z|^2}{1-\xi |z|^2}\right)^{s},
	\end{equation*}
	where $0<s<1$.

	We begin with the following lemma, which is required for the proof of Theorem~\ref{Th_ssss1}.

	\begin{lemma}\label{lm_1}
		Suppose that ${\phi_{0,\xi}}(z)=\xi z,$ where $\xi\in\overline{\mathbb D}$ and $z\in\mathbb D$. Then 
		\begin{align*}
			&(i)~~\textbf{Ber}(C_{{\phi_{0,\xi}}}; \mathcal D_s)~~\text{is singleton if and only if}~~\xi=1,\\
			&(ii)~~\textbf{Ber}(C_{{\phi_{0,\xi}}};\mathcal D_s)\subset \mathbb R~~\text{if and only if}~~\xi\in\mathbb R.
		\end{align*}
	\end{lemma}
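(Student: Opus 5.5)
The plan is to work directly from the explicit formula
\[
\widetilde{C_{\phi_{0,\xi}}}(z)=\left(\frac{1-|z|^2}{1-\xi|z|^2}\right)^{s}.
\]
It depends on $z$ only through $t=|z|^2\in[0,1)$. So
\[
\textbf{Ber}(C_{\phi_{0,\xi}};\mathcal D_s)=\{F_\xi(t)^s: t\in[0,1)\},\qquad F_\xi(t)=\frac{1-t}{1-\xi t},
\]
where the power uses the principal branch. This branch is the one implicit in the kernel $k^s_w$, since $1-\xi t$ has positive real part. Both parts reduce to elementary statements about the one-variable function $t\mapsto F_\xi(t)$.

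For (i), if $\xi=1$ then $F_1\equiv1$, so the Berezin range is $\{1\}$. Conversely, suppose the range is a singleton. Evaluating at $t=0$ gives the value $1$, so $F_\xi(t)^s=1$ for all $t$.

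\begin{itemize}
\item Since $F_\xi(t)^s=\exp(s\log F_\xi(t))$ with $|\arg F_\xi(t)|<\pi$ and $0<s<1$, we get $s\log F_\xi(t)\in 2\pi i\mathbb Z$.
\item Because $|s\,\mathrm{Im}\log F_\xi|<\pi$, this forces $\log F_\xi(t)=0$, that is, $F_\xi(t)=1$.
\item Then $1-t=1-\xi t$ for all $t\in(0,1)$, hence $\xi=1$.
\end{itemize}

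A simpler route to the same conclusion is to note that the map $t\mapsto F_\xi(t)^s$ is real-analytic and constant. Differentiating at $t=0$ gives $s(\xi-1)=0$.

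For (ii), if $\xi\in\mathbb R$ (so $\xi\in[-1,1]$), then $1-\xi t>0$ and $1-t>0$. Thus $F_\xi(t)>0$ and its principal $s$-th power is real and positive. Conversely, assume every value $F_\xi(t)^s$ is real. The quantity $\arg F_\xi(t)\in(-\pi,\pi)$ satisfies $s\arg F_\xi(t)\in\pi\mathbb Z$. Since $s<1$ and $s\arg F_\xi(t)$ varies continuously from $0$ at $t=0$, this forces $\arg F_\xi(t)=0$ for all $t$, so $F_\xi(t)>0$. Then $1-\xi t=(1-t)/F_\xi(t)$ is real for some $t\in(0,1)$, which gives $\mathrm{Im}\,\xi=0$.

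I expect the only delicate point to be the branch bookkeeping. Reality or constancy of $F^s$ must be transferred back to $F$, and this is where $0<s<1$ and $|\arg F_\xi|<\pi$ are used. To avoid needing continuity, I will instead expand at $t\to0$: $F_\xi(t)^s=1+s(\xi-1)t+O(t^2)$. Imaginary part zero for small $t$ gives $\mathrm{Im}\,\xi=0$ directly, and this is the argument I would write.
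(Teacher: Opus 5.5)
Your proof is correct, but it takes a different route from the paper's.

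In (i) the paper avoids any discussion of branches. From $F_\xi(\rho^2)^s=1$ it passes to the modulus, $1-\rho^2=|1-\xi\rho^2|$. It then uses $|1-\xi\rho^2|\ge 1-|\xi|\rho^2$ together with $\xi\in\overline{\mathbb D}$ to force $|\xi|=1$. Finally it writes $\xi=e^{i\theta}$ and compares $(1-\rho^2)^2$ with $|1-\rho^2e^{i\theta}|^2$ to get $\cos\theta=1$. You instead either use the principal branch to upgrade $F_\xi^s\equiv 1$ to $F_\xi\equiv 1$, or simply differentiate at $t=0$.

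In (ii) the paper asserts without comment that $F_\xi(\rho^2)^s\in\mathbb R$ implies $F_\xi(\rho^2)\in\mathbb R$, and then solves $F_\xi(\rho^2)=m_\rho$ for $\xi$. Your observation that $s\arg F_\xi\in\pi\mathbb Z$ with $|s\arg F_\xi|<\pi$ is exactly the justification that step needs. Your expansion $F_\xi(t)^s=1+s(\xi-1)t+O(t^2)$ sidesteps the step altogether.

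The paper's modulus argument gives a branch-free proof of (i), since $|w^s|=|w|^s$ for any determination of the power. The cost is that it needs $|\xi|\le 1$ and a small trigonometric computation. Your first-order expansion is shorter and treats both parts uniformly. It also shows that both conclusions already follow from the behaviour of the Berezin transform near the origin.
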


	\begin{proof}
		$(i)$ Since the sufficiency follows directly, we address only the necessity.
		Let $z=\rho e^{i\psi}$ with $\rho\in[0, 1)$ and $\psi\in[0,2\pi).$ Then 
		\begin{equation*}
			\widetilde{C_{{\phi_{0,\xi}}}}(\rho e^{i\psi})=\left(\frac{1-\rho^2}{1-\xi \rho^2}\right)^{s}.
		\end{equation*}
		Suppose that $\textbf{Ber}(C_{{\phi_{0,\xi}}}; \mathcal D_s)$ is singleton. As $\widetilde{C_{{\phi_{0,\xi}}}}(0)=1,$ so $\textbf{Ber}(C_{{\phi_{0,\xi}}};\mathcal D_s)=\{1\}.$
		Thus, $\left(\frac{1-\rho^2}{1-\xi \rho^2}\right)^{s}=1$ for all $\rho\in[0, 1).$
		This implies that 
		\begin{equation}\label{eq2}
			\left|\frac{1-\rho^2}{1-\xi \rho^2}\right|=1\implies 1-\rho^2=|1-\xi \rho^2|.
		\end{equation}
		Thus, $1-\rho^2\geq 1-|\xi| \rho^2$ and so $|\xi|\geq 1.$ Since $\xi \in\overline{\mathbb D},$ we have $|\xi|=1.$
		Substituting $\xi=\cos \theta+i \sin \theta$ into \eqref{eq2}, we obtain 
		$$1-\rho^2=|(1-\rho^2\cos \theta)-i~\rho^2 \sin \theta|.$$
		This implies that $\cos\theta=1.$ Thus, $\sin \theta=0.$\\
		$(ii)$ Let $\textbf{Ber}(C_{{\phi_{0,\xi}}};\mathcal D_s)\subseteq \mathbb R$. Then 
		$\frac{1-\rho^2}{1-\xi \rho^2}\in\mathbb R$ for all $\rho\in[0, 1).$ Let $\frac{1-\rho^2}{1-\xi \rho^2}=m_\rho,$ where $m_\rho\in\mathbb R.$ Clearly, $m_\rho\neq 0$ for all $\rho\in[0, 1).$ Then
		$\xi=\frac{1}{\rho^2}-\frac{1-\rho^2}{m_\rho \rho^2}\in\mathbb R.$ Thus, $\xi\in\mathbb R.$ The converse is immediate.
	\end{proof}

	We are now in a position to establish Theorem~\ref{Th_ssss1}, thereby determining the values of $\xi$ that guarantee the convexity of the Berezin range of $C_{\phi_{0,\xi}}.$
	
	\begin{proof}[Proof of Theorem~\ref{Th_ssss1}]
		Consider $z=\rho e^{i\psi},$ where $\rho\in[0, 1)$ and $\psi\in[0,2\pi).$ Then 
		\begin{equation}\label{eq_lm_1}
			\widetilde{C_{{\phi_{0,\xi}}}}(\rho e^{i\psi})=\left(\frac{1-\rho^2}{1-\xi \rho^2}\right)^{s}.
		\end{equation}
		For $\xi=1,$ Lemma \ref{lm_1} $(i)$ yields that $\textbf{Ber}(C_{{\phi_{0,\xi}}};\mathcal D_s)$ is a single point and thus convex.
		For $\xi\in [-1,1),$ we have $\textbf{Ber}(C_{{\phi_{0,\xi}}};\mathcal D_s)=(0, 1],$ which is also convex.\\
		Conversely, assume that $\textbf{Ber}(C_{{\phi_{0,\xi}}};\mathcal D_s)$ is convex. Equation \eqref{eq_lm_1} shows that $\widetilde{C_{{\phi_{0,\xi}}}}(\rho e^{i\psi})$ does not depend on $\psi.$ Hence, $\textbf{Ber}(C_{{\phi_{0,\xi}}};\mathcal D_s)$ forms a path in $\mathbb C.$ Thus, convexity ensures that $\textbf{Ber}(C_{{\phi_{0,\xi}}};\mathcal D_s)$ reduces to either a line segment or a single point. If $\textbf{Ber}(C_{{\phi_{0,\xi}}};\mathcal D_s)$ reduces to a point then by Lemma \ref{lm_1} $(i)$ we must have $\xi=1$. Let $\textbf{Ber}(C_{{\phi_{0,\xi}}};\mathcal D_s)$ be a line segment. We have $\widetilde{C_{{\phi_{0,\xi}}}}(0)=1$ and $\lim\limits_{\rho\to 1^-}\widetilde{C_{{\phi_{0,\xi}}}}(\rho e ^{i\psi})=0.$ It follows that $\textbf{Ber}(C_{{\phi_{0,\xi}}};\mathcal D_s)$ is a line segment passing through the point 1 and approaching the origin and hence $\textbf{Ber}(C_{{\phi_{0,\xi}}};\mathcal D_s)\subset\mathbb R.$ By Lemma \ref{lm_1} $(ii)$, we conclude that 
		$\xi\in\mathbb R$. Thus, $\xi\in[-1, 1].$ 
	\end{proof}
	
	Our next goal is to analyze the convexity of the Berezin range of $C_{\phi_{\gamma, 1}},$ where $\phi_{\gamma, 1}$ is a Blaschke factor.

	Let $\phi_{\gamma, 1}(z)=\frac{z-\gamma}{1-\bar{\gamma}z}$ where $\gamma, z \in\mathbb D.$ Then 
	\begin{equation}\label{eq3}
		\widetilde{C_{\phi_{\gamma, 1}}}(z)=\left(\frac{1-|z|^2}{1-\bar{z}\phi_{\gamma, 1}(z)}\right)^{s}= \left(\frac{(1-|z|^2)(1-\bar{\gamma}z)}{1-\bar{\gamma}z-|z|^2+\bar{z}\gamma}\right)^{s},
	\end{equation}
	where $0<s<1$.

	We now present the following lemmas, which form the foundation for the proof of Theorem~\ref{Th_ssss2}.
	
	\begin{lemma}\label{lm3}
		Suppose $C_{\phi_{\gamma, 1}}\in\mathcal B(\mathcal D_s)$ with $\phi_{\gamma, 1}(z)=\frac{z-\gamma}{1-\bar{\gamma}z},$ where $ \gamma, z \in\mathbb D.$ Then
		$Im\{\widetilde{C_{\phi_{\gamma, 1}}}(z)\}=0$ if and only if $Im\{\bar{\gamma}z\}=0$.
	\end{lemma}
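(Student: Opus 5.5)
The idea is to reduce the statement about the $s$-th power in \eqref{eq3} to a statement about its base, and then read off the sign of the imaginary part of the base directly. Throughout, $w^{s}$ denotes the principal branch, as in \eqref{e1}. Write
$$w(z)=\frac{1-|z|^2}{1-\bar z\phi_{\gamma,1}(z)},$$
so that $\widetilde{C_{\phi_{\gamma,1}}}(z)=w(z)^s$.

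\textbf{Step 1: locate $w(z)$.} Since $|\bar z\phi_{\gamma,1}(z)|<1$, the number $1-\bar z\phi_{\gamma,1}(z)$ has positive real part. Its reciprocal, multiplied by the positive number $1-|z|^2$, therefore lies in the open right half-plane, so $|\arg w(z)|<\pi/2$. Because $0<s<1$, we get $\arg w(z)^s=s\arg w(z)\in(-\pi/2,\pi/2)$. Hence $w(z)^s$ is real exactly when $\arg w(z)=0$, that is, exactly when $\operatorname{Im} w(z)=0$. This reduces the lemma to showing that $\operatorname{Im} w(z)=0$ if and only if $\operatorname{Im}\{\bar\gamma z\}=0$.

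\textbf{Step 2: compute $\operatorname{Im} w(z)$.} Using the second expression in \eqref{eq3}, the denominator is $1-|z|^2-(\bar\gamma z-\gamma\bar z)=a-2it$, where
$$a=1-|z|^2>0,\qquad \bar\gamma z=u+it .$$
Multiplying numerator and denominator by $a+2it$ turns the denominator into the positive number $a^2+4t^2$. The numerator becomes $a(1-u-it)(a+2it)$, whose imaginary part is
$$a\,t\bigl(2(1-u)-a\bigr)=a\,t\,\bigl(1-2u+|z|^2\bigr).$$

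\textbf{Step 3: check the factor is positive.} We have $u\le|\gamma||z|<|z|$, so
$$1-2u+|z|^2>1-2|z|+|z|^2=(1-|z|)^2\ge 0 .$$
Thus $\operatorname{Im} w(z)$ equals $t$ times a strictly positive quantity. It follows that $\operatorname{Im} w(z)=0$ if and only if $t=\operatorname{Im}\{\bar\gamma z\}=0$, and combined with Step 1 this proves the lemma.

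The main points needing care are the branch argument in Step 1 and the strict positivity of $1-2u+|z|^2$ in Step 3. The latter uses $|\gamma|<1$ in an essential way.
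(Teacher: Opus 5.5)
Your proof is correct and follows essentially the same route as the paper. Both arguments separate off a positive modulus factor and use that the base of the $s$-th power lies in the open right half-plane, so that $s$ times its argument stays in $(-\pi/2,\pi/2)$. Both then observe that the imaginary part of the base is $Im\{\bar\gamma z\}$ times the positive quantity $1+|z|^2-2Re\{\bar\gamma z\}$; you justify this positivity, whereas the paper only asserts it.

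One small repair is needed in Step 3. At $z=0$ the strict inequality $|\gamma||z|<|z|$ fails, so write $1-2u+|z|^2\ge (1-|z|)^2>0$ instead. The strict positivity comes from $|z|<1$, not from $|\gamma|<1$ as you claim.
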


	\begin{proof}
		
		It follows from \eqref{eq3} that
		
		\small{\begin{eqnarray}\label{eq_p1}
				&&\widetilde{C_{\phi_{\gamma, 1}}}(z)\nonumber\\
				&&=\left(\frac{(1-|z|^2)(1-\bar{\gamma}z)(1-|z|^2-2i Im\{\bar{z}\gamma\})}{|1-|z|^2+2i Im\{\bar{z}\gamma\}|^2}\right)^{s}\nonumber\\
				&&=\left(\frac{1-|z|^2}{|1-|z|^2+2iIm \{\gamma \bar{z}\}|^2}\right)^s\nonumber\\
				&&\left((1-|z|^2)(1-Re\{\bar{\gamma}z\})+2Im ^2\{\bar{\gamma}z\}
				+i Im\{\bar{\gamma}z\}(1+|z|^2-2Re\{\bar{\gamma}z\})\right)^s.
		\end{eqnarray}}
		
		Let
		\begin{eqnarray*}
			(1-|z|^2)(1-Re\{\bar{\gamma}z\})+2Im ^2\{\bar{\gamma}z\}=\rho \cos \theta
		\end{eqnarray*}
		and 
		\begin{eqnarray*}
			Im\{\bar{\gamma}z\}(1+|z|^2-2Re\{\bar{\gamma}z\})=\rho \sin\theta,
		\end{eqnarray*}
		where $\rho=\left(((1-|z|^2)(1-Re\{\bar{\gamma}z\})+2Im^2\{\bar{\gamma}z\})^2+Im^2\{\bar{\gamma}z\}(1+|z|^2-2Re\{\bar{\gamma z}\})^2\right)^{\frac{1}{2}}$ and $-\frac{\pi}{2}<\theta<\frac{\pi}{2}.$ It follows from \eqref{eq_p1} that
		\begin{eqnarray*}
			\widetilde{C_{\phi_{\gamma, 1}}}(z)
			&=&\left(\frac{\rho(1-|z|^2)}{|1-|z|^2+2iIm \{\gamma \bar{z}\}|^2}\right)^s(\cos s\theta+i\sin s\theta).
		\end{eqnarray*}
		
		Thus, 
		\begin{eqnarray}
			&&Re\{\widetilde{C_{\phi_{\gamma, 1}}}(z)\}\nonumber\\
			&&=\left(\frac{(1-|z|^2)}{|1-|z|^2+2iIm \{\gamma \bar{z}\}|^2}\right)^s\nonumber\\
			&&\Big(((1-|z|^2)(1-Re\{\bar{\gamma}z\})+2Im^2\{\bar{\gamma}z\})^2+Im^2\{\bar{\gamma}z\}(1+|z|^2-2Re\{\bar{\gamma z}\})^2\Big)^{\frac{s}{2}}\nonumber\\
			&&\cos \left(s\tan^{-1}\left(\frac{Im\{\bar{\gamma}z\}(1+|z|^2-2Re\{\bar{\gamma z}\})}{(1-|z|^2)(1-Re\{\bar{\gamma}z\})+2Im^2\{\bar{\gamma}z\}}\right)\right)
		\end{eqnarray}
		and 
		\begin{eqnarray}\label{eqp_2}
			&&Im\{\widetilde{C_{\phi_{\gamma, 1}}}(z)\}\nonumber\\
			&&=\left(\frac{(1-|z|^2)}{|1-|z|^2+2iIm \{\gamma \bar{z}\}|^2}\right)^s\nonumber\\
			&&\Big(((1-|z|^2)(1-Re\{\bar{\gamma}z\})+2Im^2\{\bar{\gamma}z\})^2+Im^2\{\bar{\gamma}z\}(1+|z|^2-2Re\{\bar{\gamma z}\})^2\Big)^{\frac{s}{2}}\nonumber\\
			&& \sin \left(s\tan^{-1}\left(\frac{Im\{\bar{\gamma}z\}(1+|z|^2-2Re\{\bar{\gamma z}\})}{(1-|z|^2)(1-Re\{\bar{\gamma}z\})+2Im^2\{\bar{\gamma}z\}}\right)\right).
		\end{eqnarray}
		It follows from \eqref{eqp_2} that if $Im\{\bar{\gamma}z\}=0$ then $Im\{\widetilde{C_{\phi_{\gamma, 1}}}(z)\}=0.$ Conversely, assume that $Im\{\widetilde{C_{\phi_{\gamma, 1}}}(z)\}=0.$ As
		$\frac{(1-|z|^2)}{|1-|z|^2+2iIm \{\gamma \bar{z}\}|^2}>0, (1-|z|^2)(1-Re\{\bar{\gamma}z\})+2Im^2\{\bar{\gamma}z\}>0$ and $1+|z|^2-2Re\{\bar{\gamma z}\}>0,$ \eqref{eqp_2} implies that
		\begin{eqnarray}\label{eq_p4}
			\sin \left(s\tan^{-1}\left(\frac{Im\{\bar{\gamma}z\}(1+|z|^2-2Re\{\bar{\gamma z}\})}{(1-|z|^2)(1-Re\{\bar{\gamma}z\})+2Im^2\{\bar{\gamma}z\}}\right)\right)=0.
		\end{eqnarray}
		Since $(1-|z|^2)(1-Re\{\bar{\gamma}z\})+2Im^2\{\bar{\gamma}z\}>0,$ it follows that
		$$-\frac{\pi}{2}<\tan^{-1}\left(\frac{Im\{\bar{\gamma}z\}(1+|z|^2-2Re\{\bar{\gamma z}\})}{(1-|z|^2)(1-Re\{\bar{\gamma}z\})+2Im^2\{\bar{\gamma}z\}}\right)<\frac{\pi}{2}.$$ \
		As $0<s< 1$, we have 
		\begin{eqnarray}\label{eqp_5}
			-\frac{\pi}{2}<s\tan^{-1}\left(\frac{Im\{\bar{\gamma}z\}(1+|z|^2-2Re\{\bar{\gamma z}\})}{(1-|z|^2)(1-Re\{\bar{\gamma}z\})+2Im^2\{\bar{\gamma}z\}}\right)<\frac{\pi}{2}.
		\end{eqnarray}
		Combining \eqref{eq_p4} and \eqref{eqp_5}, we have
		$\tan^{-1}\left(\frac{Im\{\bar{\gamma}z\}(1+|z|^2-2Re\{\bar{\gamma z}\})}{(1-|z|^2)(1-Re\{\bar{\gamma}z\})+2Im^2\{\bar{\gamma}z\}}\right)=0.$ This implies that 
		$Im\{\bar{\gamma}z\}=0$.
	\end{proof}

	In light of the preceding result, we provide the proof of Theorem~\ref{Th_ssss2}, which characterizes the convexity of the Berezin range of $C_{\phi_{\gamma, 1}}$ on $\mathcal D_s$.

	\begin{proof}[Proof of Theorem~\ref{Th_ssss2}]
		For $\gamma=0$, $\textbf{Ber}(C_{\phi_{\gamma, 1}};\mathcal D_s)=\{1\}$ is convex. Conversely, let $\textbf{Ber}(C_{\phi_{\gamma, 1}}; \mathcal D_s)$ be convex. Now we show that for any $z \in \mathbb D,$ $Re\{\widetilde{C_{\phi_{\gamma, 1}}}(z)\}\in\textbf{Ber}(C_{\phi_{\gamma, 1}};\mathcal D_s).$ Let $z=\rho e^{i\psi}$ and $\gamma=r e^{i\theta},$ for $\rho, r \in [0,1)$ and $\psi, \theta \in [0,2\pi).$ A straightforward computation yields 
		$$\widetilde{C_{\phi_{\gamma, 1}}}(\rho e^{i(2\theta-\psi)})=\overline{\widetilde{C_{\phi_{\gamma, 1}}}(z)} \in \textbf{Ber}(C_{\phi_{\gamma, 1}};\mathcal D_s).$$ 
		Since $\textbf{Ber}(C_{\phi_{\gamma, 1}};\mathcal D_s)$ is convex, so we have 
		$$Re\{\widetilde{C_{\phi_{\gamma, 1}}}(z)\}=\frac 12\left(\widetilde{C_{\phi_{\gamma, 1}}}(z)+\overline{\widetilde{C_{\phi_{\gamma, 1}}}(z)}\right)\in \textbf{Ber}(C_{\phi_{\gamma, 1}};\mathcal D_s).$$ It follows that for every
		$z\in\mathbb D,$ there exists $w\in\mathbb D$ such that $\widetilde{C_{\phi_{\gamma, 1}}}(w)=Re\{\widetilde{C_{\phi_{\gamma, 1}}}(z)\}.$ Then $Im\{\widetilde{C_{\phi_{\gamma, 1}}}(w)\}=0.$ From Lemma \ref{lm3}, $Im\{\bar{\gamma}w\}=0$ and so $w=t\gamma$ for some $t\in\left(-\frac{1}{|\gamma|}, \frac{1}{|\gamma|}\right)$. Hence 
		\begin{align*}
			&\widetilde{C_{\phi_{\gamma, 1}}}(w)\\&=Re\{\widetilde{C_{\phi_{\gamma, 1}}}(t\gamma)\}\\
			&=\left(\frac{1-|t\gamma|^2}{|1-|t\gamma|^2+2iIm \{\gamma \bar{t\gamma}\}|^2}\right)^{s}\\
			&\Big(((1-|t\gamma|^2)(1-Re\{\bar{\gamma}t\gamma\})+2Im^2\{\bar{\gamma}t\gamma\})^2+Im^2\{\bar{\gamma}t\gamma\}(1+|t\gamma|^2-2Re\{\bar{\gamma t\gamma}\})^2\Big)^{\frac{s}{2}}\\
			&\cos \left(s\tan^{-1}\left(\frac{Im\{\bar{\gamma}t\gamma\}(1+|t\gamma|^2-2Re\{\bar{\gamma t\gamma}\})}{(1-|t\gamma|^2)(1-Re\{\bar{\gamma}t\gamma\})+2Im^2\{\bar{\gamma}t\gamma\}}\right)\right)\\
			&=(1-t|\gamma|^2)^{s}.
		\end{align*}
		Therefore, we obtain
		$$\left\{\widetilde{C_{\phi_{\gamma, 1}}}(t\gamma):t\in\left(-\frac{1}{|\gamma|}, \frac{1}{|\gamma|}\right)\right\}=\left((1-|\gamma|)^{s}, (1+|\gamma|)^{s}\right).$$ 
		We have
		$$
		\lim\limits_{\rho\to 1^-}\widetilde{C_{\phi_{\gamma, 1}}}(\rho e^{i\psi})=
		\begin{cases}
			0&\text{for}~ \gamma\neq0\\
			1&\text{for}~ \gamma=0.
		\end{cases}
		$$
		It follows that when $\gamma\neq0,$ given $\epsilon$ with $0<\delta<(1-|\gamma|)^{s},$ there exists a point $z$ such that  $|Re\{\widetilde{C_{\phi_{\gamma, 1}}}(z)\}|<\delta.$ Now, for this $z$ there exists $w\in\mathbb D$ such that $|\widetilde{C_{\phi_{\gamma, 1}}}(w)|=|Re\{\widetilde{C_{\phi_{\gamma, 1}}}(z)\}|<\epsilon.$ This contradicts 
		$\widetilde{C_{\phi_{\gamma, 1}}}(w)\in((1-|\gamma|)^{s}, (1+|\gamma|)^{s}).$
		Therefore, $\gamma=0.$
	\end{proof}

	\textbf{Final remark:} We investigate the convexity of the Berezin range of unweighted composition operators on $\mathcal D_s$ and, in Section \ref{sct4}, analyze the Berezin range of Weyl-type weighted composition operators in particular cases. This naturally leads to the following open problem.
	\begin{question}
		Characterize the symbols $\phi$ and $\psi$ that ensure the convexity of the Berezin range of $C_{\psi,\phi}$ on $\mathcal D_s$.
	\end{question}

	\section*{Declarations}	
	\textit{Acknowledgements.} Miss Somdatta Barik would like to thank UGC, Govt. of India, for the financial support in the form of Senior Research Fellowship under the mentorship of Prof. Kallol Paul.
	Dr. Anirban Sen is supported by Czech Science Foundation (GA CR) grant no. 25-18042S.\\
	\textit{Author Contributions:} All authors contributed equally to this manuscript and approved the final version.\\
	\textit{Data Availability :} No data was used for the research described in this article.\\
	\textit{Conflict of interest:} The authors declare that they have no conflict of interest.\\

\end{document}